
%
\documentclass{amsart}
%
%
\usepackage{amsmath}%
\usepackage{amsfonts}%
\usepackage{amssymb}%
\usepackage{graphicx}
\usepackage{color}
%
\newtheorem{theorem}{Theorem}
\theoremstyle{plain}

\newtheorem{lemma}{Lemma}

\numberwithin{equation}{section}
\begin{document}
\title[iterates on escaping Fatou components]{iterates of meromorphic functions on escaping Fatou components}
\author[Zheng and Wu]{Zheng Jian-Hua and Wu Cheng-Fa$^*$}
\subjclass[2010]{37F10 (primary), 30D05 (secondary)}
\keywords{Escaping Sets, Meromorphic Functions, Fatou set, Julia set. \\  {\color{white} aa} $^*$Corresponding author}%
\address{Department of Mathematical Sciences, Tsinghua University, Beijing, 100084, P. R. China}
\email{zheng-jh@mail.tsinghua.edu.cn}
\address{Institute for Advanced Study, Shenzhen University, Shenzhen, 518060, P. R. China}
\address{College of Mathematics and Statistics, Shenzhen University, Shenzhen, 518060, P. R. China}
\email{cfwu@szu.edu.cn}

\begin{abstract}In this paper, we prove that the ratio of the modulus of the iterates of two points in an escaping Fatou component could be bounded even if the orbit of the component contains a sequence of annuli whose moduli tend to infinity, and this cannot happen when the maximal modulus of the meromorphic function is uniformly large enough. In this way we extend certain related results for entire functions to meromorphic functions with infinitely many poles.
\end{abstract}

\maketitle




\section{Introduction and Main Results}

Let $f$ be a meromorphic function that is not a M\"{o}bius
transformation and let $f^{n}$, $n\in \mathbb{N}$, denote the $n$th
iterate of $f$. The Fatou set $F(f)$ of $f$ is defined as the set of
points, $z\in \hat{\mathbb{C}}$, such that
$\{f^{n}\}_{n\in\mathbb{N}}$ is well-defined and forms a normal
family in some neighborhood of $z$. Therefore, $F(f)$ is an open
set. Since $F(f)$ is completely invariant under $f$, i.e., $f(z)\in
F(f)$ if and only if $z\in F(f)$, for a component $U$ of $F(f)$,
$f(U)$ is contained in certain component of $F(f)$. We write $U_n$ for the
component of $F(f)$ such that $f^n(U)\subseteq U_n$. The complement
$J(f)=\hat{\mathbb{C}}\setminus F(f)$ of $F(f)$ is called the Julia set
of $f$. Then $\bigcup_{n=0}^\infty f^{-n}(\infty)\subset J(f)$ and
if $\bigcup_{n=0}^\infty f^{-n}(\infty)$ contains three distinct
points in the extended complex plane $\hat{\mathbb{C}}$, we have
$\overline{\bigcup_{n=0}^\infty f^{-n}(\infty)}=J(f)$. Thus every
$f^n$ is analytic in $F(f)$ for $n\geq 1$.

In this paper, we investigate Fatou components $U$ of $f$ with
$f^n|_U\to\infty \, (n\to\infty)$. For convenience, we call such $U$ an escaping Fatou component of $f$. We first
determine what kinds of Fatou components will be escaping to
$\infty$ under iterates.

A periodic Fatou component $W$ of $f$ with period $p\geq 1$, i.e.,
$f^p(W)\subseteq W=W_p$ and $p$ is the minimal positive integer such
that the inclusion holds, is known as a Baker domain if
$f^{np}|_W\to a \, (n\to\infty)$, but $f^p$ is not defined at $a$. Then
$\infty$ must be the limit value of $f^{np+j}$ in $W$ for some
$0\leq j\leq p-1$ and so $W_j$ is unbounded. If an escaping Fatou
component $U$ of $f$ is periodic, i.e., for some positive integer
$p$, $f^p(U)\subseteq U=U_p$, then $U$ is a Baker domain of $f$. The
periodic Fatou components can be classified into five possible types:
attracting domains, parabolic domains, Siegel disks, Herman rings
and Baker domains (see \cite{ber}, Theorem 6).

A Fatou component $U$ is called a wandering domain of $f$, if for
any pair of positive integers $m\not= n$, $U_m\not=U_n$ and actually
$U_m\cap U_n=\emptyset$. We know that the escaping Fatou component
$U$ is either a wandering domain, a Baker domain or the preimage of a
Baker domain. Many examples of such Baker domains and wandering
domains have been revealed; see \cite{R} for a survey about Baker
domains and \cite{Baker1, Baker2, BKL1, Her, EL, KS,
RipponStallard2, Ber, S, Bi, BZ, FGJ, L} for studies on wandering domains. In addition, the existence of wandering
domains with special geometric or dynamical behaviors
and/or for special classes of meromorphic functions continues to
be an important topic and to attract much interest.

Let $U$ be an escaping Fatou component of a meromorphic function
$f$. Generally for any two distinct points $a$ and $b$ in $U$ there
exist $M>1$ and a positive integer $N$ such that for $\forall\ n\geq
N$,
\begin{equation}\label{equ1}|f^n(b)|^{1/M}\leq|f^n(a)|\leq |f^n(b)|^{M}\end{equation}
and if $\hat{\mathbb{C}}\setminus U$ contains an unbounded
component, we have a more precise inequality
\begin{equation}\label{equ1.4}M^{-1}|f^n(a)|\leq |f^n(b)|\leq M|f^n(a)|,\ \forall\ n\geq N.\end{equation}
(see \cite{Baker}, Lemma 5 or \cite{ber}, Lemma 7). Therefore, if
$U$ is an unbounded or simply-connected wandering domain, then
(\ref{equ1.4}) holds. As we know, an escaping Baker domain is
unbounded and may be either simply-connected or multiply-connected.
However, it was shown in \cite{Zheng4} and \cite{Rippon} that the inequality (\ref{equ1.4}) holds
for Baker domains. Therefore,
an escaping Fatou component $U$ that does not satisfy (\ref{equ1.4}) must
be bounded, multiply-connected and wandering.

The first entire function with multiply connected Fatou component
was constructed by I. N. Baker \cite{Baker1, Baker2, Baker3}.
Every multiply-connected Fatou component $U$ of a transcendental
entire function is a bounded escaping wandering domain  \cite{T,Baker4} and for all sufficiently large $n$,
$f^n(U)$ contains a round annulus centered at the origin with the modulus tending to $\infty$ as $n\to\infty$
\cite{Zheng2,BRS}. However, this is not
the case for meromorphic functions with poles. A meromorphic
function may have Herman rings, multiply connected attracting
domains, parabolic domains or Baker domains; see \cite{D} for
examples of meromorphic functions with only one pole that have an
invariant multiply connected Fatou component. A meromorphic function
was constructed by Baker, Kotus and L\"u \cite{BKL1} to have a
multiply connected wandering domain $U$ of preassigned connectivity
such that the limit set of $\{f^n|_U\}$ is an infinite set including
$\infty$ (a planar region $E$ is said to have connectivity $m$ if
$\hat{\mathbb{C}}\setminus E$ has $m$ components). Such a wandering
domain of a transcendental entire function must be simply connected
and a corresponding example has been constructed by Eremenko
and Lyubich \cite{EL}. Examples of meromorphic functions with
finitely or infinitely many poles, which have simply, doubly or
infinitely connected wandering domains and which have connectivity-changing wandering domains under iterates, can be found in
\cite{RipponStallard2}.

There exists a meromorphic function with bounded,
multiply-connected, escaping and wandering domains separating the
origin and $\infty$ where (\ref{equ1.4}) holds (see Example 5.2 of
\cite{Zheng8}). As we know, there also exist escaping wandering
domains such that (\ref{equ1}) holds while (\ref{equ1.4}) does not
hold. In fact, an entire function does not satisfy (\ref{equ1.4}) in
its multiply connected Fatou components; however, (\ref{equ1}) is
the best possible (see Theorem 1.1 in \cite{BRS}). Furthermore, if
(\ref{equ1.4}) does not hold, then $U$ contains two points $a$
and $b$ such that
\begin{equation}\label{equ2}|f^{n_k}(a)|/|f^{n_k}(b)|\to\infty
\quad (k\to\infty).
\end{equation}
We have shown in \cite{Zheng8} that, when \eqref{equ2} holds, $\bigcup_{n=0}^\infty f^n(U)$ contains a sequence of
annuli $A(r_m,R_m)$ with $R_m/r_m\to\infty (m\to\infty)$ and
$r_m\to\infty (m\to\infty)$; we call such a sequence of annuli {\it an
infinite modulus annulus sequence}. Here and henceforth, for $R>r>0$
we denote the round annulus $\{z:\ r<|z|<R\}$ by $A(r,R)$. In addition, for all
sufficiently large $n_k$, $U_{n_k}=f^{n_k}(U)$ separates the origin
and $\infty$. A condition was given in \cite{Zheng8} to ensure that for all sufficiently large $n$,
$U_{n}=f^{n}(U)$ surrounds the origin and contains a large round
annulus centered at the origin which forms an infinite modulus
annulus sequence. Also, we can construct meromorphic functions which have
a wandering domain $U$ such that $f^{2k}(U)$ surrounds the origin and contains a large round annulus centered at the origin, and \eqref{equ2} holds for $n_k = 2k$, but $f^{2k-1}(U)$ does not surround the origin, by
suitably modifying the construction of Example 5.4 in \cite{Zheng8}.
Here we omit the details which are routine but lengthy.

This leads us to  the following:

{\sl {\bf Question $\mathcal{A}$:}\ For an escaping Fatou component
$U$ of $f$, if $\bigcup_{n=0}^\infty f^n(U)$ contains an infinite
modulus annulus sequence, is there some pair of points $a$ and $b$
in $U$ such that (\ref{equ1.4}) does not hold? }

An equivalent statement of Question $\mathcal{A}$ is that if
(\ref{equ1.4}) holds in an escaping Fatou component $U$ of
$f$, then whether  $\bigcup_{n=0}^\infty f^n(U)$ may contain an
infinite modulus annulus sequence? It turns out that there exist wandering domains whose orbit contains an infinite
modulus annulus sequence and (\ref{equ1.4}) holds. This is the first
result of this paper.

\begin{theorem}\label{thm1}There exists a transcendental meromorphic function $f$
which has an escaping wandering domain $U$ such that for all
sufficiently large $n$, $f^n(U)\supset A(r_n,R_n)$ with
$R_n/r_n\to\infty (n\to\infty)$ and $r_n\to\infty (n\to\infty)$, but
for any two points $a, b\in U$, there exists an $M>1$ such that for
all sufficiently large $n$,
$$M^{-1}|f^n(a)|\leq |f^n(b)|\leq M|f^n(a)|.$$
\end{theorem}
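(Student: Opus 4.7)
The plan is to construct a transcendental meromorphic function $f$ by an explicit Weierstrass-type quotient so that $f$ exhibits two complementary behaviors: on a thin ``orbit annulus'' it is approximately an affine multiplication (which, under iteration, will yield~(\ref{equ1.4})), while carefully placed poles near a wider target annulus ensure that each $f(U_{n})$ absorbs a round annulus of arbitrarily large modulus. The construction is modeled on Example~5.2 of \cite{Zheng8}, whose wandering domain already satisfies (\ref{equ1.4}), but augmented with additional pole placements to produce an infinite modulus annulus sequence.

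Concretely, I would first fix an increasing sequence of reference radii $\rho_{n}\to\infty$ together with auxiliary radii $r_{n}<\rho_{n}<R_{n}$ with $R_{n}/r_{n}\to\infty$, arranged so that the annuli $A(r_{n},R_{n})$ are pairwise disjoint with large logarithmic gaps. I would then define
\[
f(z)=c\,z\prod_{n=0}^{\infty}\frac{E_{k_{n}}(z/q_{n})}{E_{k_{n}}(z/p_{n})},
\]
with Weierstrass primary factors $E_{k_{n}}$ and carefully chosen zeros $q_{n}$ and poles $p_{n}$. Two asymptotic estimates would then carry the argument. On a thin annulus $B_{n}=A(\rho_{n}/2,2\rho_{n})$ one should have $f(z)=\lambda_{n}z\,(1+o(1))$ with $|\lambda_{n}|\rho_{n}=\rho_{n+1}$; iterating this near-linearity gives $|f^{n}(a)|/|f^{n}(b)|\to|a/b|$ for points whose orbits remain in the thin annuli, hence~(\ref{equ1.4}). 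On the wider annulus $A(r_{n},R_{n})$, the proximity of the pole $p_{n}$ would force $f$ to attain every modulus in a range containing $A(r_{n+1},R_{n+1})$, so that $f(A(r_{n},R_{n}))\supset A(r_{n+1},R_{n+1})$, and this propagates inductively to the iterates of~$U$.

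Finally, I would let $U$ be the Fatou component containing a base point in $B_{0}$, and verify by normal family arguments together with Rouch\'e-type bookkeeping on the approximation errors that $f^{n}(U)$ contains both an annular neighborhood of the orbit and the full annulus $A(r_{n},R_{n})$ for all sufficiently large~$n$. That $U$ is wandering is immediate from $\rho_{n}\to\infty$. The main obstacle is the coexistence of the two regimes inside a single meromorphic function: the near-linear behavior on the thin orbit annulus and the pole-driven expansion across the wider annulus must be enforced together with uniform control of the product's approximation errors. This will require a sufficiently rapid separation of the scales $\rho_{n}$ and a careful choice of the Weierstrass weights~$k_{n}$, refining the techniques used in \cite{Zheng8}.
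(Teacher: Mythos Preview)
Your plan contains a structural incompatibility between the two regimes you want to impose. You require $f^{n}(U)\supset A(r_{n},R_{n})$, so every point of the wide annulus $A(r_{n},R_{n})$ has an $f^{n}$-preimage in $U$. But on that wide annulus you also want ``pole-driven expansion'' forcing $f$ to attain a huge range of moduli. Pick two points $w,w'\in A(r_{n},R_{n})$ with $|f(w)|/|f(w')|$ as large as you like (one near the pole, one in the thin sub-annulus $B_{n}$), and pull them back to $a,b\in U$; then $|f^{n+1}(a)|/|f^{n+1}(b)|$ is unbounded in $n$, which destroys (\ref{equ1.4}). Your near-linearity argument only controls orbits that stay in the thin annuli $B_{n}$, and you give no mechanism forcing \emph{every} orbit from $U$ to do so---indeed it cannot, once $f^{n}(U)$ contains the whole of $A(r_{n},R_{n})$.

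The paper resolves this tension by dispensing with the pole-driven expansion altogether: it makes $f$ close to a pure dilation $T_{n}(z)=(r'_{n+1}/r_{n})z$ on the \emph{entire} large annulus $A(r_{n},R_{n})$, not just on a thin core. The arithmetic constraint $R_{n}/r_{n}=R'_{n+1}/r'_{n+1}$ with $R'_{n+1}/r'_{n+1}<R_{n+1}/r_{n+1}$ lets the dilation carry $A(r_{n},R_{n})$ into a proper sub-annulus of $A(r_{n+1},R_{n+1})$, so the moduli can still grow to infinity while $|f(a)|/|f(b)|\approx|a|/|b|$ everywhere on the orbit. The construction is done via Runge approximation (not a Weierstrass product), together with barrier circles $C_{n}$ mapped near $0$ that confine $U_{n}$ to a slightly fattened annulus $A(a_{n},b_{n})$; a hyperbolic-distance argument then shows that any two orbits from $U$ eventually sit in the same narrow zone of $A(a_{n},b_{n})$, which yields (\ref{equ1.4}) for \emph{all} pairs $a,b\in U$. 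The missing idea in your proposal is precisely that the infinite-modulus annuli can be produced by dilations alone, with no need for local expansion that would spoil the ratio bound.
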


However, the answer to Question $\mathcal{A}$ is positive for
transcendental entire functions. Indeed, (\ref{equ1.4}) does not hold
for an entire function in its multiply-connected Fatou components, as
we mentioned earlier (see Theorem 1.1 in \cite{BRS}). In this paper,
we generalize this result to meromorphic functions with infinitely
many poles.

In order to clearly state our second result of this paper, we
introduce some notations. From now on, we let
$M(r,f)$ ($\hat{m}(r,f)$, respectively) denote the maximal
(minimal, respectively) modulus of $f$ on the circle $\{|z|=r\}$,
i.e.,
$$M(r,f)=\max\{|f(z)|:\ |z|=r\},\quad \hat{m}(r,f)=\min\{|f(z)|:\ |z|=r\}.$$
If $f$ has a pole on $\{|z|=r\}$, then $M(r,f)=+\infty$. Then $T(r,f)$ refers to
the Nevanlinna characteristic function of $f$; $m(r,f)$ denotes the proximity
function of $f$; $N(r,f)$ is the integrated counting function of $f$
in Nevanlinna theory (their definitions are presented in Section 2; see \cite{Hayman1} and \cite{Zheng1}).

For an escaping Fatou component $U$ and $z_0 \in U$, define
$$h_n(z):=\frac{\log|f^n(z)|}{\log|f^n(z_0)}|,\ \forall\ z\in U,$$
$$\overline{h}(z):=\limsup_{n\to\infty} h_n(z),\ \underline{h}(z):=\liminf_{n\to\infty} h_n(z),\ \forall\ z\in
U.$$ In view of (\ref{equ1}) we easily see that $\overline{h}(z)$
and $\underline{h}(z)$ exist on $U$. In \cite{BRS}, Bergweiler,
Rippon and Stallard first introduced and proved that
$h(z):=\overline{h}(z)=\underline{h}(z)$ on a multiply connected
Fatou component $U$ of an entire function $f$ and they applied $h$
successfully to describe the geometric structure of $U$. This shows that $h$ is a significant function.

\begin{theorem}\label{thm6}\ Assume that
for some $C\geq 1$, $c>0$ and $r_0>0$, we have
\begin{equation}\label{equ1.5}\log M(Cr,f)\geq\left(1-\frac{c}{\log r}\right)T(r,f),\ \forall\ r>r_0.\end{equation}
Then there exist constants $D>1$, $d>1$ and $R_0>0$, which depend on $C,c$ and $r_0$, such that for a
Fatou component $U$ of $f$, if for some $m>0$ and some $r\geq R_0$,
we have
\begin{equation}\label{equ1.5+}A(D^{-d}r,D^dr)\subseteq
f^m(U),\end{equation} then $U$ is an escaping wandering domain,
$h(z):=\overline{h}(z)=\underline{h}(z)$ exists on $U$ and $h(z)$ is
a non-constant positive harmonic function on $U$. Therefore for any
point $z_0\in U$ and any neighborhood $V$ of $z_0$ in $U$, we have
for some constant $0<\alpha<1$ and all sufficiently large $n$
$$f^n(V)\supset A(|f^n(z_0)|^{1-\alpha},|f^n(z_0)|^{1+\alpha}).$$
Moreover, there exist two points $a$ and $b$ in $V$ and $\tau>1$
such that
$$|f^n(a)|^\tau\leq |f^n(b)|$$
and
$$A_n=A(r_n,R_n)\subseteq  f^n(V)$$ with
$R_n\geq r_n^\tau\to\infty \, (n\to\infty)$ and $A_{n+1}\subseteq
f(A_n)$.
\end{theorem}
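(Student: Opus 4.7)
The plan is to adapt the Bergweiler--Rippon--Stallard strategy from \cite{BRS} (originally for multiply-connected Fatou components of entire functions) to the meromorphic setting, using the growth hypothesis (\ref{equ1.5}) as a quantitative substitute for the absence of poles. The backbone is an annulus-propagation lemma: starting from (\ref{equ1.5+}), I would produce, for every large $n$, a round annulus $A_n\subseteq f^{m+n}(U)$ with $A_{n+1}\subseteq f(A_n)$ and $R_n/r_n\to\infty$. Combined with the escaping-component dichotomy recalled in the introduction (Baker domains and their preimages satisfy (\ref{equ1.4}) by \cite{Zheng4, Rippon}), this already forces $U$ to be an escaping wandering domain, and all remaining assertions follow from a normal-family argument for the $h_n$.

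The crucial technical ingredient is a minimum-modulus bound under (\ref{equ1.5}). Writing $T(r,f)=m(r,f)+N(r,f)$, the standard estimate $\log M(Cr,f)\leq \frac{C+1}{C-1}T(\tilde Cr,f)$ for $\tilde C>C$ together with (\ref{equ1.5}) forces $N(r,f)=o(T(r,f))$, so the poles of $f$ are too sparse to spoil the distribution of $|f|$ on most circles. A pole-avoidance argument in the spirit of the meromorphic $\cos\pi\rho$ results of Hayman (see \cite{Hayman1}) should then produce radii $\rho$ filling most of $[D^{-d}r,D^d r]$ such that $\{|z|=\rho\}$ contains no pole and $\log \hat m(\rho,f)\geq (1-\varepsilon)\log M(r,f)$. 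Combined with the maximum principle this gives
\[
f\bigl(A(D^{-d}r,D^d r)\bigr)\supseteq A\bigl(D^{-d}M(r,f)^{1-\varepsilon},\,D^d M(r,f)\bigr),
\]
and iterating from (\ref{equ1.5+}) produces the required nested sequence $A_n$.

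For the harmonic function, each $h_n$ is positive harmonic on $U$ for $n$ large (since $f^n$ is zero-free on the escaping component once its image is far from $0$), and Harnack's inequality combined with (\ref{equ1}) gives normality of $\{h_n\}$. Any two subsequential limits are positive harmonic, agree at the reference point $c$, and the propagation lemma bounds their relative oscillation on each successive annulus $A_k$ by a quantity tending to $0$; this forces coincidence, i.e.\ $\overline h=\underline h=h$ is harmonic and positive on $U$. Non-constancy follows by applying the propagation lemma to two seed points $z_1, z_2\in U$ whose orbits land on opposite sides of the starting annulus in (\ref{equ1.5+}): the ratio $\log|f^n(z_1)|/\log|f^n(z_2)|$ is forced to leave any bounded neighborhood of $1$, so $h(z_1)\neq h(z_2)$.

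To finish, fix $z_0\in U$ and a neighborhood $V$. Since $h$ is non-constant and continuous, $h(V)$ is an open neighborhood of $h(z_0)$; choosing $0<\alpha<1$ small so that $(1\pm\alpha) h(z_0)\in h(V)$ and invoking uniform convergence $h_n\to h$ on compact subsets of $V$, together with one final application of the propagation lemma to fill in the annular image, yields $f^n(V)\supseteq A(|f^n(z_0)|^{1-\alpha},|f^n(z_0)|^{1+\alpha})$ for large $n$. Picking $a,b\in V$ with $h(a)>\tau h(b)$ for some $\tau>1$ then gives $|f^n(a)|^\tau\leq |f^n(b)|$ eventually, and one more iteration produces $A_n=A(r_n,R_n)\subseteq f^n(V)$ with $R_n\geq r_n^\tau$ and $A_{n+1}\subseteq f(A_n)$. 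The hardest step is expected to be the minimum-modulus lemma: for entire functions \cite{BRS} can invoke the classical $\cos\pi\rho$ theorem, whereas here one must squeeze the required density of pole-free, $|f|$-large radii directly out of (\ref{equ1.5}) while allowing infinitely many poles.
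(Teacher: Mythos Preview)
Your overall architecture---annulus propagation, then a normal-family analysis of the $h_n$---matches the paper's. But the technical core of your ``crucial ingredient'' is wrong in a way the paper itself flags.

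You claim that (\ref{equ1.5}) combined with the Poisson--Jensen upper bound forces $N(r,f)=o(T(r,f))$. It does not. Immediately after the statement of the theorem the paper exhibits $f(z)=(e^z-1)/(e^{-z}+1)$, which satisfies (\ref{equ1.5}) with $C=1$ while $N(r,f)\sim r/\pi\sim m(r,f)$ and $\delta(\infty,f)=1/2$; Theorem~\ref{thm5} goes further and produces examples with $\delta(\infty,f)=0$. So any route through ``poles are asymptotically negligible'' is blocked. More importantly, the whole pole-avoidance / $\cos\pi\rho$ detour is unnecessary: the hypothesis is $A(D^{-d}r,D^dr)\subseteq f^m(U)$, and $U$ is a Fatou component, so $f$ is automatically analytic (and, once the orbit is large, zero-free with $|f|>1$) on the entire annulus. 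Every circle in that annulus is already pole-free; there is nothing to avoid.

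What the paper actually does for the minimum-modulus step is use a hyperbolic-metric estimate (Lemma~\ref{lemxx}, from \cite{Zheng8}) valid for any analytic $h$ with $|h|>1$ on an annulus: $\log\hat m(\rho,h)\geq s\,\log M(\rho,h)$ with an explicit $s=s(\rho,r,R)$ close to $1$ when $\rho$ is well inside $A(r,R)$. This, together with (\ref{equ1.5}) and the convexity inequality $T(R,f)\geq(\log R/\log r)T(r,f)$, controls $|f(b)|$ from below. For the matching upper bound on $|f(a)|$ the paper proves a new Poisson-formula estimate (Lemma~\ref{lem5}) bounding $\log M(r',f)$ by a constant close to $1$ times $T(\hat R,f)$; this is where the poles genuinely enter, and the annulus width is what makes the pole contribution $n(r,f)\log(R'/(r'-r))$ small relative to $T$. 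Combining the two gives $|f(b)|\geq|f(a)|^{t_ms_m\sigma_m\tau_m}$ with the product tending to $\sigma>1$, and then the hyperbolic covering Lemma~\ref{lem2.2} (not the maximum principle) is what converts this into $f(U_m)\supset A(r_{m+1},R_{m+1})$ and lets the induction run.

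Finally, your sketch for $\overline h=\underline h$ (``relative oscillation on successive $A_k$ tends to $0$'') is not how the paper proceeds and is not obviously correct. The paper argues by contradiction: assuming $\overline h(z_0)>\underline h(z_0)$, it first uses non-constancy of $\overline h$ (already established from the inequality $|f^n(b)|\geq|f^n(a)|^{(\sigma+1)/2}$) to manufacture, via Lemma~\ref{lem2.2}, a large annulus $A(|f^m(c)|^\alpha,|f^m(c)|^\beta)\subset f^m(U)$ with $\alpha<1<\overline h(z_0)<\beta$, and then reruns the propagation estimate with $a=f^m(c)$, $b=f^m(z_0)$ to force $|f^{n+m}(z_0)|>|f^{n+m}(c)|^\eta$ for all $n$, i.e.\ $\underline h(z_0)\geq\eta>\underline h(z_0)$. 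The remaining conclusions (the annulus in $f^n(V)$ and the points $a,b$) follow exactly as you describe, via non-constancy of $h$ and one more application of Lemma~\ref{lem2.2} and Lemma~\ref{lem6}.
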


We can give effective conditions to determine $D, d$ and $R_0$ in
Theorem \ref{thm6}. We also note that the conditions that $D $, $ d$ and
$R_0$ satisfy are chosen to guarantee the conclusion of Lemma \ref{lem6} to
hold. Therefore from the proof of Theorem 1.1 in \cite{Zheng8}, a
rough calculation shows that $D, d$ and $R_0$ satisfying the
following inequalities are sufficient for Theorem \ref{thm6}:
\begin{equation}\label{equ1.6}D>2Ce^{c+1},\
\frac{\pi}{d\cos\frac{\pi}{2d}}\left(1+\frac{\pi}{2\log
D}\right)<\frac{1}{6},\end{equation} $\ R_0\geq D^dr_0$ and
\begin{equation}\label{equ1.7+}\frac{T(r,f)}{\log r}>d\log D,\ \forall\ r\geq R_0.\end{equation}
For instance,  we have (\ref{equ1.6}) by taking $C=c=1$, $D=e^3$ and $d=10\pi$.

Since for a transcendental meromorphic function $f$,
$$\frac{T(r,f)}{\log r}\to\infty\ (r\to\infty),$$
it follows that if $\bigcup_{n=0}^\infty f^n(U)$
contains an infinite modulus annulus sequence for an escaping Fatou
component $U$ of $f$, then we have (\ref{equ1.5+}), where $D, d$ and
$R_0$ are chosen such that (\ref{equ1.6}) and (\ref{equ1.7+}) hold.

We give some remarks on (\ref{equ1.5}). Theorem \ref{thm1} tells us
that the condition (\ref{equ1.5}) cannot be removed in Theorem \ref{thm6}. In
Example 5.3 of \cite{Zheng8}, we constructed a meromorphic function
which has an escaping wandering domain $U$ in which there exist two
points $a$ and $b$ such that $|f^n(b)|/|f^n(a)|\to\infty \,
(n\to\infty)$ but $h_U(z)\equiv 1$ on $U$.

If $f$ has at most finitely many poles, then (\ref{equ1.5}) holds immediately. Hence, Theorem \ref{thm6} extends the related results
in \cite{BRS} to meromorphic functions with infinitely many poles.
If
\begin{equation}\label{equ2+}T(r,f)\geq N(r,f)\log r,\ \forall\ r\geq
r_0,\end{equation} then we have
\begin{equation*}m(r,f)=T(r,f)-N(r,f)\geq \left(1-\frac{1}{\log r}\right)T(r,f),\ \forall\ r\geq
r_0.
\end{equation*}
Also, (\ref{equ2+}) implies (\ref{equ1.5}) and that the Nevanlinna
deficiency $\delta(\infty,f)$ of $f$ of poles is $1$. It is easy to
find a meromorphic function $f$ satisfying (\ref{equ1.5}) and
$\delta(\infty,f)<1$. Consider the function
$f(z)=\dfrac{e^z-1}{e^{-z}+1}$. Then
$$m(r,f)\sim \frac{r}{\pi},\ \ N(r,f)\sim \frac{r}{\pi}\
(r\to\infty),\ \ \delta(\infty,f)=\frac{1}{2}$$ and
$$\log M(r,f)\geq \log\frac{e^r-1}{e^{-r}+1}\sim r\sim \frac{\pi}{2} T(r,f)\ (r\to\infty).$$
Then we obtain (\ref{equ1.5}) for $C=1$.

Finally, we mention that Theorem \ref{thm6} is still true even if
(\ref{equ1.5}) is satisfied for all $r$ possibly outside a set
with finite logarithmic measure. Indeed, for a set $E$ with finite logarithmic measure, for any $d>1$, we have $(r,dr)\setminus E\not=\emptyset$ for all sufficiently large $r$. Then in the proof of Theorem \ref{thm6}, we can choose available $r$ outside $E$.

The existence of large annuli in the orbit of an  escaping Fatou
component guarantees  the stability of the annuli in the sense that
the suitable large annuli still exist under relatively small changes
to the function in question.

\begin{theorem}\label{thm3}\ Let $f$ be a meromorphic function satisfying the conditions in Theorem \ref{thm6}.
If $f$ has an escaping Fatou component $U$ such that
$\bigcup_{n=0}^\infty f^n(U)$ contains an infinite modulus annulus
sequence, then for any meromorphic function $g$ which is analytic on
$\bigcup_{n=N}^\infty f^n(U)$ for some $N$ and satisfies
\begin{equation}\label{equ1.6+}M(r,f-g)\leq M(r,f)^\delta, \text{for}\ \{z:|z|=r\}\subset
\bigcup_{n=N}^\infty f^n(U),  \ \ \delta\in (0,1),
\end{equation} $g$ has an escaping Fatou
component $V$ on which the results in Theorem \ref{thm6} also hold for $g$.
\end{theorem}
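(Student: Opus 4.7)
The approach is to exploit the perturbation bound (\ref{equ1.6+}) to transfer the infinite modulus annulus sequence in the $f$-orbit of $U$ to a corresponding $g$-orbit, and then to apply Theorem \ref{thm6} to $g$ itself to deduce the full conclusion.

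I would start by fixing $z_0\in U$ and writing $w_n:=f^n(z_0)$. Theorem \ref{thm6} applied to $f$ yields $|w_n|\to\infty$ and, for any neighbourhood $W\subseteq U$ of $z_0$ and all large $n$, an annulus
\[
A_n:=A\bigl(|w_n|^{1-\alpha},|w_n|^{1+\alpha}\bigr)\subseteq f^n(W)
\]
for some fixed $0<\alpha<1$. On each such annulus with $n\geq N$ the hypothesis (\ref{equ1.6+}) gives $|f(z)-g(z)|\leq M(|z|,f)^\delta$. Whenever $|f(z)|$ is comparable to $M(|z|,f)$ on a circle inside $A_n$, this implies
\[
\bigl|g(z)/f(z)-1\bigr|=O\bigl(M(|z|,f)^{\delta-1}\bigr)\longrightarrow 0,
\]
so $\log|g(z)|=(1+o(1))\log|f(z)|$ uniformly, and therefore $g(A_n)$ still contains an annulus $A(|w_{n+1}|^{1-\alpha'},|w_{n+1}|^{1+\alpha'})$ for a slightly smaller $\alpha'>0$ and all sufficiently large $n$. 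Iterating this comparison along the orbit produces a chain of large annuli $B_k$ in the $g$-orbit whose inner radii escape to infinity.

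Fixing $m$ large enough for the tracking to begin and setting $\zeta_0:=w_m$, the chain $\{B_k\}$ shows that $g^k\to\infty$ uniformly on a small neighbourhood of $\zeta_0$; in particular $\zeta_0\in F(g)$, and the component $V$ of $F(g)$ containing $\zeta_0$ is an escaping Fatou component of $g$ whose orbit contains an annulus $A(D^{-d}r,D^d r)$ with $r\geq R_0$ (take $r$ of the order of $|w_m|$, admissible for $m$ large by (\ref{equ1.6}) and (\ref{equ1.7+})).

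It then remains to apply Theorem \ref{thm6} directly to $g$ and $V$, which will yield all the claimed conclusions. The step I expect to be the main obstacle is verifying (\ref{equ1.5}) for $g$, since (\ref{equ1.6+}) supplies closeness only on the orbit. My plan is to use $M(r,g)\geq M(r,f)-M(r,f)^\delta\geq\tfrac12 M(r,f)$ at the relevant radii, together with $T(r,g)\leq T(r,f)+T(r,f-g)+O(1)\leq(1+\delta)T(r,f)+O(1)$, to transfer (\ref{equ1.5}) to $g$ with slightly relaxed constants $C',c'$. If a purely global estimate proves unavailable, the alternative is to rerun the annulus-propagation argument of Theorem \ref{thm6} directly for $g$, restricted to $\bigcup_{n\geq N}f^n(U)$, replacing the global Nevanlinna estimate on $f$ by (\ref{equ1.6+}) at each step.
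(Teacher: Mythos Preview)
Your plan A---transferring (\ref{equ1.5}) to $g$ with relaxed constants---does not work, and this is the genuine gap. The inequality $T(r,g)\leq T(r,f)+T(r,f-g)+O(1)$ is fine, but the step $T(r,f-g)\leq\delta\,T(r,f)+O(1)$ is unjustified: condition (\ref{equ1.6+}) bounds only $M(r,f-g)$, hence $m(r,f-g)$, and only on circles that happen to lie inside the orbit $\bigcup_{n\geq N}f^n(U)$. It says nothing about $N(r,f-g)$, and the poles of $g$ outside the orbit annuli are completely unconstrained, so $T(r,g)$ may be much larger than $T(r,f)$. Likewise $M(Cr,g)$ is uncontrolled for radii $Cr$ outside the orbit. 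Hence (\ref{equ1.5}) need not hold for $g$, and Theorem \ref{thm6} cannot be applied to $g$ as a black box.

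What the paper does is precisely your ``alternative''. First, rather than showing that $g(A_n)$ \emph{contains} a large annulus, it shows (via the maximum principle and the minimum-modulus Lemma \ref{lemxx}) that $g(A(r_m,R_m))$ is \emph{contained} in an annulus which in turn lies inside $f^{m+1}(U)$; iterating gives $g^n(A(r_m,R_m))\subset f^{m+n}(U)$, so normality and escape for $g$ on $A(r_m,R_m)$ are inherited directly from $f$. Second, for the structural conclusions, the paper observes that the two estimates that drive the proof of Theorem \ref{thm6}---the lower bound $\log M(Cr,\cdot)\geq(1-o(1))T(r,\cdot)$ and the upper bound of Lemma \ref{lem5}---can be established for $g$ on the orbit annuli \emph{in terms of $T(r,f)$} (not $T(r,g)$), using (\ref{equ1.6+}). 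One then reruns the proof of Theorem \ref{thm6} for $g$ with $T(r,f)$ serving as the comparison function throughout. So your fallback is in fact the proof; the primary route fails.

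A smaller point: in your step claiming $g(A_n)\supset A(|w_{n+1}|^{1-\alpha'},|w_{n+1}|^{1+\alpha'})$, the assertion that ``$|f(z)|$ is comparable to $M(|z|,f)$ on a circle'' is not automatic---on a full circle one only has $|f(z)|\geq\hat m(|z|,f)$, and relating this to $M(|z|,f)$ requires the minimum-modulus Lemma \ref{lemxx}. The paper handles this carefully.
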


Theorem \ref{thm3} was proved in \cite{BRS} for the case where
 $f$
and $g$ are entire. Theorem \ref{thm3} gives us a strong motivation to find a meromorphic function $f$ with $\delta(\infty,f)=0$
which has an escaping wandering domain $U$ satisfying the properties stated in
Theorem \ref{thm6}. This is stated as follows.

\begin{theorem}\label{thm5} There exists a meromorphic function $f$ with $\delta(\infty,f)=0$ which has a
wandering domain $W$ such that the results in Theorem \ref{thm6} hold for $f$
and $W$.
\end{theorem}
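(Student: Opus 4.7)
The plan is to combine a base entire function that triggers Theorem \ref{thm3} with a meromorphic perturbation that contributes enough poles to force $\delta(\infty,\cdot)=0$. Let $f_0$ be a transcendental entire function possessing a multiply-connected wandering domain $U_0$; Baker's classical construction provides such an example. Then $f_0$ satisfies (\ref{equ1.5}) trivially, since for an entire function $\log M(r,f_0)\geq T(r,f_0)=m(r,f_0)$, and by the result of \cite{BRS} (invoked in the introduction) the orbit $\mathcal{O}=\bigcup_{n=0}^\infty f_0^n(U_0)$ contains, for all sufficiently large $n$, a round annulus $A(r_n,R_n)$ with $r_n\to\infty$ and $R_n/r_n\to\infty$. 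In particular, $f_0$ fulfills all the hypotheses of Theorem \ref{thm3}.

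Next I would construct a meromorphic function $h$ on $\mathbb{C}$ that is holomorphic on $\mathcal{O}$ and satisfies, first, $M(r,h)\leq M(r,f_0)^{1/2}$ on every circle $\{|z|=r\}\subset \mathcal{O}$, and, second, that the poles of $h$ are distributed densely enough in the complement of $\mathcal{O}$ that along some sequence $r_k\to\infty$ one has $N(r_k,h)/T(r_k,h)\to 1$ together with $T(r_k,h)/T(r_k,f_0)\to\infty$. A natural candidate is a Mittag--Leffler series whose poles are placed in the bounded components of $\widehat{\mathbb{C}}\setminus\mathcal{O}$, typically in the gap regions between consecutive orbit annuli. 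Because $R_n/r_n\to\infty$, the orbit annuli become arbitrarily wide, so the poles can be kept at large distances from every orbit circle; this permits residues to be chosen small enough for the first condition to hold, while still packing enough high-multiplicity poles in the gaps to drive $N(r,h)$ up to the required growth.

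Now set $f=f_0+h$. Then $f$ is meromorphic, analytic on $\mathcal{O}$, and $M(r,f_0-f)=M(r,h)\leq M(r,f_0)^{1/2}$ on every circle in $\mathcal{O}$. Applying Theorem \ref{thm3} with base function $f_0$ and perturbed function $f$ produces an escaping wandering Fatou component $W$ of $f$ on which all conclusions of Theorem \ref{thm6} hold. Since $f_0$ is entire, $N(r,f)=N(r,h)$, while $T(r,f)\leq T(r,f_0)+T(r,h)+O(1)$. The choice of $h$ then yields $N(r_k,f)/T(r_k,f)\to 1$ along the sequence $r_k$, and hence $\delta(\infty,f)=0$.

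The main technical obstacle is the explicit construction of $h$: a delicate balance is required between the density of poles (which drives $N(r,h)$ up to match $T(r,h)$ along a subsequence) and the decay of their residues (which is needed to uphold the bound $M(r,h)\leq M(r,f_0)^{1/2}$ on orbit circles). The favorable feature is that the wandering domain $U_0$ escapes very fast, so $M(r,f_0)^{1/2}$ is already enormous on orbit circles, and the gap regions between consecutive orbit annuli become arbitrarily large; these together leave ample room to accommodate a rich pole structure for $h$ whose Nevanlinna proximity contribution is negligible compared to its counting contribution along the sequence $r_k$.
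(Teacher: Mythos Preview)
Your approach is correct and essentially matches the paper's: perturb an entire function possessing a multiply-connected wandering domain by a Mittag--Leffler series whose poles sit on circles in the gap regions between orbit annuli, then invoke Theorem~\ref{thm3}. The paper's explicit realization takes the base function to have \emph{zero order} (the construction from \cite{BZ}) and places roughly $r_{n+1}$ simple poles on each gap circle $\{|z|=r_n\}$ with residues summing to at most $\varepsilon_n 2^{-n}$, so that $|h|<1$ on the orbit annuli while $N(r,g)\geq r$ dominates $m(r,g)\leq m(r,f_0)+O(1)=o(r)$; this zero-order choice is precisely the simplification that renders your balancing condition $T(r_k,h)/T(r_k,f_0)\to\infty$ automatic.
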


Furthermore, in terms of Theorem \ref{thm3}, we can find a meromorphic function to which Theorem 2 applies, but it does not satisfy (\ref{equ1.5}) in a set of
$r$ with infinite logarithmic measure. Indeed, we can make a modification of
$f$ in $\mathbb{C}\setminus \bigcup_{n=N}^\infty f^n(U)$ such that (\ref{equ1.5}) does not hold.

The organization of this paper is as follows. In Section 2, we collect some
preliminary results that will be needed in the proofs of theorems. The proofs of Theorems \ref{thm1}-\ref{thm5} are given in Section 3. We will make further
discussions or remarks after the proofs of theorems.

\section{Preliminary Lemmas}

Firstly we need a covering lemma which comes from the hyperbolic
metric. Let $\Omega$ be a hyperbolic domain in the extended complex
plane $\hat{\mathbb{C}}$, that is, $\hat{\mathbb{C}}\backslash
\Omega$ contains at least three points. Then there exists the hyperbolic metric $\lambda_{\Omega}(z)|{\rm d}z|$ with
Gaussian curvature $-1$ on
$\Omega$, where $\lambda_{\Omega}(z)$ is the hyperbolic
density of $\Omega$ at $z\in \Omega.$ In particular, we have
$$\lambda_{D}(z)=\frac{1}{|z|\log |z|},\ D=\{z:|z|>1\}$$
and  \cite[p.~791]{Zheng8}
\begin{equation}\label{2.1+}
\lambda_{A}(z)=\frac{\pi}{\left|z\right|{\rm mod}(A)\sin(\pi
\log(R/\left|z\right|)/{\rm mod}(A))}, \quad \forall\ z\in A=A(r,R),
\end{equation}
where $A(r,R)=\{z:\ r<|z|<R\}$ and ${\rm mod}(A)=\log R/r$ is the
modulus of $A$. The hyperbolic distance $d_{\Omega}(u,v)$ of two
points $u, v\in\Omega$ is defined by
\begin{equation*}\label{2.1}
d_{\Omega}(u,v)=\inf_{\alpha}\int_{\alpha}\lambda_{\Omega}(z)\left|dz\right|,
\end{equation*}
where the infimum is taken over all piecewise smooth paths $\alpha$
in $\Omega$ joining $u$ and $v$.

\begin{lemma}\label{lem2.2}(\cite{Zheng8}) \ Let $f$ be analytic on a hyperbolic domain $U$ with $0\not\in
f(U)$. If there exist two distinct points $z_1$ and $z_2$ in $U$
such that $|f(z_1)|> e^{\kappa\delta}|f(z_2)|$, where
$\delta=d_U(z_1,z_2)$ and $\kappa=\Gamma(1/4)^4/(4\pi^2)  \approx 4.37688 $, then
there exists a point $\hat{z}\in U$ such that $|f(z_2)|\leq
|f(\hat{z})|\leq |f(z_1)|$ and
\begin{equation}\label{2.1}f(U)\supset
A\left(e^\kappa\left(\frac{|f(z_2)|}{|f(z_1)|}\right)^{1/\delta}|f(\hat{z})|,\
e^{-\kappa}\left(\frac{|f(z_1)|}{|f(z_2)|}\right)^{1/\delta}|f(\hat{z})|\right).\end{equation}
If $|f(z_1)|\geq
\exp\left(\frac{\kappa\delta}{1-\delta}\right)|f(z_2)|$ and
$0<\delta<1$, then
\begin{equation}\label{2.2}
f(U)\supset A(|f(z_2)|,|f(z_1)|).
\end{equation}
In particular, for $\delta\leq\frac{1}{6}$ and $|f(z_1)|\geq
e|f(z_2)|$, we have (\ref{2.2}).
\end{lemma}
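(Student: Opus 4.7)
My approach is to argue by contradiction via the Schwarz--Pick lemma for hyperbolic domains. Since $0\notin f(U)$, the map $f$ already takes $U$ into $\mathbb{C}\setminus\{0\}$. To prove the first annular inclusion (\ref{2.1}), I would suppose there is a point $w_0$ in the purported image annulus that is \emph{not} in $f(U)$, so that $f$ in fact maps $U$ into the hyperbolic domain $\mathbb{C}\setminus\{0,w_0\}$. Schwarz--Pick then yields
\[
 d_{\mathbb{C}\setminus\{0,w_0\}}\bigl(f(z_1),f(z_2)\bigr)\leq d_U(z_1,z_2)=\delta,
\]
and the plan is to derive a contradiction by producing a lower bound on the left-hand side that strictly exceeds $\delta$ for every $w_0$ in the admissible range.

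The intermediate point $\hat z\in U$ is furnished by continuity of $|f|$ on the connected set $U$: since $|f|$ takes the values $|f(z_1)|$ and $|f(z_2)|$, it also realises every intermediate value, and one picks $\hat z$ so that $|f(\hat z)|$ is the multiplicative mid-radius of the target annulus, which makes the subsequent estimate symmetric in $w_0$. The conformal change of variable $\zeta=z/w_0$ transports $\mathbb{C}\setminus\{0,w_0\}$ to $\mathbb{C}\setminus\{0,1\}$, and I would invoke the sharp lower bound for the hyperbolic density $\lambda_{\mathbb{C}\setminus\{0,1\}}$, where $\kappa=\Gamma(1/4)^4/(4\pi^2)$ is the classical value of this density at a distinguished point of the elliptic modular uniformisation. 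Integrating that density lower bound along any path connecting $f(z_1)/w_0$ and $f(z_2)/w_0$ yields a lower bound on the hyperbolic distance of the desired form, and rewriting back in the $z$-variable produces the explicit forbidden range of $|w_0|$ that coincides with the annulus displayed in (\ref{2.1}).

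The two remaining claims follow algebraically from (\ref{2.1}). Under the hypothesis $|f(z_1)|/|f(z_2)|\geq\exp(\kappa\delta/(1-\delta))$ one rearranges to $(|f(z_1)|/|f(z_2)|)^{(1-\delta)/\delta}\geq e^{\kappa}$, and together with $|f(z_2)|\leq|f(\hat z)|\leq|f(z_1)|$ this forces the annulus in (\ref{2.1}) to contain $A(|f(z_2)|,|f(z_1)|)$, yielding (\ref{2.2}). For the ``in particular'' clause, $\delta\leq 1/6$ gives $\kappa\delta/(1-\delta)\leq\kappa/5<1$, while $|f(z_1)|\geq e|f(z_2)|$ gives $\log(|f(z_1)|/|f(z_2)|)\geq 1$, so the previous hypothesis is automatic. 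The principal technical obstacle is obtaining the density lower bound with the exact sharp constant $\kappa$; any softer universal estimate on $\lambda_{\mathbb{C}\setminus\{0,1\}}$ would give an analogous lemma only with a larger constant in place of $\kappa$.
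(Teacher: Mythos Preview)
The paper does not actually prove this lemma: it is quoted verbatim from \cite{Zheng8} and used as a black box. So there is no in-paper proof to compare against. That said, your outline matches the standard argument behind such covering lemmas and is almost certainly what appears in \cite{Zheng8}: omit a value $w_0$, apply Schwarz--Pick to $f:U\to\mathbb{C}\setminus\{0,w_0\}$, and contradict the resulting upper bound $\delta$ on the hyperbolic distance by invoking Hempel's sharp lower estimate
\[
\lambda_{\mathbb{C}\setminus\{0,1\}}(z)\ \ge\ \frac{1}{|z|\bigl(\kappa+\bigl|\log|z|\bigr|\bigr)},\qquad \kappa=\frac{\Gamma(1/4)^4}{4\pi^2},
\]
after the rescaling $z\mapsto z/w_0$. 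Your derivations of (\ref{2.2}) and of the ``in particular'' clause from (\ref{2.1}) are correct.

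One point is slightly muddled. You write that $\hat z$ is chosen so that $|f(\hat z)|$ is the multiplicative mid-radius of the target annulus; but by the very form of (\ref{2.1}) \emph{any} admissible $\hat z$ is automatically the multiplicative centre of the displayed annulus, so this does not pin $\hat z$ down. In the actual argument one selects $\hat z$ on the hyperbolic geodesic in $U$ from $z_2$ to $z_1$; continuity of $|f|$ along that geodesic lets one hit the value $|w_0|$ (for $|w_0|$ between $|f(z_2)|$ and $|f(z_1)|$), and the fact that $\hat z$ lies on the geodesic gives $d_U(z_1,\hat z)+d_U(\hat z,z_2)=\delta$, which is what makes the two one-sided Hempel estimates combine cleanly into the symmetric annulus in (\ref{2.1}). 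With that refinement your sketch is complete.
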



In order to cover an effective annulus, we are forced to
calculate carefully the hyperbolic distance in an annulus.

\begin{lemma}\label{lem3}\ Set $A=A(r,R)$ with $0<r<R$.
Then for any two points $z_1, z_2\in A$ with $|z_2|\leq |z_1|$ we
have
$$\max\left\{\frac{\pi}{{\rm mod}(A)}\log\frac{|z_1|}{|z_2|},
\log \frac{\log\frac{R}{|z_2|}}{\log\frac{R}{|z_1|}}\right\}\leq
d_A(z_1,z_2)\leq
 \frac{\pi^2}{{\rm
mod}(A)\hat{K}_0}+\frac{\pi}{{\rm
mod}(A)\hat{K}_1}\log\frac{|z_1|}{|z_2|},$$ where
\begin{eqnarray*}
\hat{K}_0=\max\limits_{j=1,2}\sin(\pi \log(R/|z_j|)/{\rm mod}(A)), \quad \hat{K}_1=\min\limits_{j=1,2}\sin(\pi \log(R/|z_j|)/{\rm
mod}(A)).
\end{eqnarray*}
 In particular, for $z_1, z_2$ with
$R/|z_1|=\left(R/r\right)^\sigma,
R/|z_2|=\left(R/r\right)^\tau$ and
$0<\sigma\leq\tau<1$, we have
\begin{equation}\label{2.3}\max\left\{(\tau-\sigma)\pi,\log\frac{\tau}{\sigma}\right\}\leq
d_A(z_1,z_2)\leq \frac{\pi^2}{\hat{K}_0{\rm
mod}(A)}+\frac{(\tau-\sigma)\pi}{\hat{K}_1},
\end{equation} with
$\hat{K}_0=\max\{\sin(\sigma\pi),\sin(\tau\pi)\}$ and
$\hat{K}_1=\min\{\sin(\sigma\pi),\sin(\tau\pi)\}$.
\end{lemma}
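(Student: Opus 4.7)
The plan is to split the proof into (i) the two lower bounds, which I would handle by separate pointwise comparisons for the hyperbolic density $\lambda_A$, and (ii) the upper bound, for which I would estimate the length of a specific two-piece path joining $z_1$ to $z_2$. The special case (\ref{2.3}) will then follow by direct substitution once one observes $\log(|z_1|/|z_2|)=(\tau-\sigma){\rm mod}(A)$ and $\pi\log(R/|z_j|)/{\rm mod}(A)\in\{\sigma\pi,\tau\pi\}$.

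For the first lower bound, I would use $\sin(\cdot)\leq 1$ in (\ref{2.1+}) to get $\lambda_A(z)\geq\pi/(|z|\,{\rm mod}(A))$, and then, parametrizing any smooth path $\alpha(t)=\rho(t)e^{i\theta(t)}$, exploit $|\alpha'(t)|\geq|\rho'(t)|$ to obtain $\int_\alpha\lambda_A|dz|\geq\frac{\pi}{{\rm mod}(A)}\log(|z_1|/|z_2|)$. For the second lower bound, I would invoke Schwarz--Pick monotonicity of the hyperbolic metric under the inclusion $A\subset\{0<|z|<R\}$, whose hyperbolic density is $1/(|z|\log(R/|z|))$; the same radial-projection device, combined with the substitution $u=\log(R/\rho)$ (so that $d\rho/(\rho\log(R/\rho))=-du/u$), then yields $d_A(z_1,z_2)\geq\log\bigl(\log(R/|z_2|)/\log(R/|z_1|)\bigr)$.

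For the upper bound I would take the path that first traces the shorter arc on the circle $|z|=|z_j|$, choosing $j\in\{1,2\}$ so that $\sin(\pi\log(R/|z_j|)/{\rm mod}(A))=\hat K_0$, of angular length at most $\pi$, and then moves radially to the remaining endpoint. On the arc $|dz|=|z_j|\,d\theta$ with the sine factor constant, so the contribution is at most $\pi^2/(\hat K_0\,{\rm mod}(A))$. On the radial segment $\rho$ ranges over $[|z_2|,|z_1|]$; since $\rho\mapsto\pi\log(R/\rho)/{\rm mod}(A)$ maps this interval into $(0,\pi)$ where $\sin$ is concave, the minimum of the sine factor on the interval is attained at one of the endpoints and equals $\hat K_1$, which bounds this contribution by $\frac{\pi}{\hat K_1\,{\rm mod}(A)}\log(|z_1|/|z_2|)$.

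The mildly delicate point is the concavity argument just used: when the two sine arguments straddle $\pi/2$ the sine is non-monotone on the interval, but concavity on $(0,\pi)$ still forces its minimum to occur at an endpoint, which is exactly what the stated bound needs.
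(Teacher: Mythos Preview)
Your proposal is correct and follows essentially the same route as the paper: the paper also derives the two lower bounds from the pointwise density estimates $\lambda_A(z)\geq\pi/(|z|\,{\rm mod}(A))$ and $\lambda_A(z)\geq 1/(|z|\log(R/|z|))$ (the latter obtained directly from $\sin x\leq x$ rather than via Schwarz--Pick for $A\subset\{0<|z|<R\}$, but these are equivalent), and obtains the upper bound from the same arc-plus-radial path with the arc placed on the circle where the sine factor equals $\hat K_0$. Your explicit concavity argument for $\sin(\pi\log(R/\rho)/{\rm mod}(A))\geq\hat K_1$ on the radial segment is a detail the paper simply asserts.
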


\begin{proof}\ We prove the first inequality. In view of (\ref{2.1+}), for all $z\in A$, we have
\begin{equation*}
\lambda_{A}(z)=\frac{\pi}{|z|{\rm mod}(A)\sin(\pi \log(R/|z|)/{\rm
mod}(A))}\geq \frac{1}{|z|\log\frac{R}{|z|}}
\end{equation*}
and
$$\lambda_{A}(z)\geq\frac{\pi}{|z|{\rm mod}(A)}.$$
Therefore
$$d_A(z_1,z_2)\geq \int_\gamma\frac{|{\rm d}z|}{|z|\log\frac{R}{|z|}}\geq \log
\frac{\log\frac{R}{|z_2|}}{\log\frac{R}{|z_1|}}$$ and
$$d_A(z_1,z_2)\geq \frac{\pi}{{\rm mod}(A)}\log\frac{|z_1|}{|z_2|},$$
 where $\gamma$
is the geodesic curve in $A$ connecting $z_1$ and $z_2$.

Next we prove the second inequality. For all $z$ with $|z_2|\leq
|z|\leq |z_1|$ we have
$$\sin(\pi \log(R/|z|)/{\rm
mod}(A))\geq\min_{j=1,2}\sin(\pi \log(R/|z_j|)/{\rm
mod}(A))=\hat{K}_1.$$ Without loss of generality, assume that
$\hat{K}_0$ attains the maximal value at $z_2$, as the same argument applies to the case when $z_2$ is replaced by $z_1$. Then
\begin{eqnarray*}d_A(z_1,z_2)&\leq& \frac{\pi}{{\rm
mod}(A)\hat{K}_0}\int_{\gamma_0}\frac{|{\rm d}z|}{|z|}+
\frac{\pi}{{\rm
mod}(A)\hat{K}_1}\int_{\gamma_1}\frac{|{\rm d}z|}{|z|} \\
&\leq& \frac{\pi^2}{{\rm mod}(A)\hat{K}_0}+\frac{\pi}{{\rm
mod}(A)\hat{K}_1}\log\frac{|z_1|}{|z_2|},\end{eqnarray*} where
$\gamma_0$ is the shortest arc from $|z_2|e^{i\arg z_1}$ to $z_2$
and $\gamma_1=\{z=te^{i\arg z_1}:\ |z_1|\leq t\leq |z_2|\}$.
\end{proof}
Note that the bounds of $d_A(z_1,z_2)$ depend only on
${\rm mod}(A)$, but otherwise are independent of the size of $R$ and $r$.

The next lemma is proved in \cite{Zheng8} using the density-decreasing property of the hyperbolic metric.
\begin{lemma}[\cite{Zheng8}] \label{lemxx}\ Let $h(z)$ be analytic on the annulus
$B=A(r, R)$ with $0<r<R<+\infty$ such that $|h(z)|>1$ on $B$. Then
\begin{eqnarray}\label{2.7}
\log\hat{m}(\rho,h)&\geq&
\exp\left(-\frac{\pi^2}{2}\max\left\{\frac{1}{\log\frac{R}{\rho}},\frac{1}{\log\frac{\rho}{r}}\right\}\right)\log
M(\rho,h)\nonumber\\
&\geq&\min\left\{\frac{\log\frac{\rho}{r}-\pi}{\log\frac{\rho}{r}+\pi},
\frac{\log\frac{R}{\rho}-\pi}{\log\frac{R}{\rho}+\pi}\right\}\log
M(\rho, h),\end{eqnarray} where $\rho\in(r,R)$ and
$\hat{m}(\rho,h)=\min\{|h(z)|: |z|=\rho\}.$
\end{lemma}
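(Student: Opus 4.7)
The plan is to combine the Schwarz--Pick principle applied to the branch $\phi=\log h$ with the explicit hyperbolic distance estimate on an annulus already recorded in Lemma \ref{lem3}. Since $|h|>1$ on $B$, the real part $u=\log|h|$ is positive and harmonic, and along any path in $B$ the lift $\phi$ is a holomorphic map into the right half-plane $\mathbb{H}_R=\{w:\operatorname{Re} w>0\}$, which carries the hyperbolic metric (curvature $-1$) $|dw|/\operatorname{Re} w$. Via the conformal identification with the upper half-plane, one obtains the endpoint inequality
$$d_{\mathbb{H}_R}(w_1,w_2)=\operatorname{arccosh}\!\left(1+\frac{|w_1-w_2|^2}{2\operatorname{Re} w_1\operatorname{Re} w_2}\right)\geq \left|\log\frac{\operatorname{Re} w_2}{\operatorname{Re} w_1}\right|.$$

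First I would pick $z_1,z_2$ on $|z|=\rho$ with $|h(z_1)|=\hat{m}(\rho,h)$ and $|h(z_2)|=M(\rho,h)$, and lift $\phi$ continuously along an arbitrary piecewise smooth path $\gamma\subset B$ from $z_1$ to $z_2$. By Schwarz--Pick, the hyperbolic length of $\phi\circ\gamma$ in $\mathbb{H}_R$ is dominated by that of $\gamma$ in $B$. Although the lift of $\phi$ depends on $\gamma$ up to integer multiples of $2\pi i$, its real part does not, so the endpoint estimate above together with the infimum over $\gamma$ gives
$$\log\frac{\log M(\rho,h)}{\log\hat{m}(\rho,h)}\leq d_B(z_1,z_2).$$

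The next step is to bound $d_B(z_1,z_2)$ via Lemma \ref{lem3}. Because $|z_1|=|z_2|=\rho$, the $\log(|z_1|/|z_2|)$-term in that upper bound drops out, leaving $\pi^2/({\rm mod}(B)\sin(\pi\log(R/\rho)/{\rm mod}(B)))$. Writing $a=\log(R/\rho)$, $b=\log(\rho/r)$, and using the concavity-based inequality $\sin(\pi t)\geq 2\min(t,1-t)$ on $[0,1]$ with $t=a/(a+b)$, one has $(a+b)\sin(\pi a/(a+b))\geq 2\min(a,b)$, so
$$d_B(z_1,z_2)\leq \frac{\pi^2}{2}\max\!\left\{\frac{1}{\log(R/\rho)},\frac{1}{\log(\rho/r)}\right\},$$
and exponentiating the previous display produces the first inequality of (\ref{2.7}). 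For the second, set $s=\min\{\log(R/\rho),\log(\rho/r)\}$ and reduce to $e^{-\pi^2/(2s)}\geq(s-\pi)/(s+\pi)$; when $s\leq\pi$ the right side is non-positive and the inequality is vacuous since $\log\hat{m}(\rho,h)\geq 0$, while for $s>\pi$ it rearranges to $\log((s+\pi)/(s-\pi))\geq\pi^2/(2s)$, which follows from $\log((1+x)/(1-x))\geq 2x$ applied at $x=\pi/s$ together with $4>\pi$.

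The only potentially delicate point is the multi-valuedness of $\phi=\log h$ on the doubly-connected domain $B$, which I would handle pathwise, exploiting that the $2\pi i$-ambiguity in the lift is invisible both to the Schwarz--Pick contraction and to the lower bound on $d_{\mathbb{H}_R}$, both of which depend only on real parts. Everything else is routine assembly of Lemma \ref{lem3} with the elementary numerical inequalities above.
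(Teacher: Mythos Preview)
Your argument is correct. The paper does not actually supply a proof of this lemma; it quotes the result from \cite{Zheng8}, so there is no ``paper's own proof'' to compare against line by line. What you have written is essentially the standard Harnack--Schwarz--Pick derivation one would expect in \cite{Zheng8}: the map $\log h$ sends $B$ holomorphically into the right half-plane, hence contracts hyperbolic distance, and the real-part projection $d_{\mathbb{H}_R}(w_1,w_2)\geq|\log(\operatorname{Re}w_2/\operatorname{Re}w_1)|$ is precisely the sharp Harnack inequality for the positive harmonic function $\log|h|$. Your handling of the multivaluedness via pathwise lifts (equivalently, by passing to the universal cover of $B$) is the right fix and costs nothing, since both the Schwarz--Pick contraction of arclength and the final lower bound depend only on real parts.

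Two small checks worth recording explicitly in a final write-up: (i) the curvature normalisation matches the paper's (curvature $-1$, so $\lambda_{\mathbb{H}_R}(w)=1/\operatorname{Re}w$ and the annulus density is as in (\ref{2.1+})); and (ii) the invocation of Lemma~\ref{lem3} when $|z_1|=|z_2|=\rho$ indeed reduces to the single term $\pi^2/(\operatorname{mod}(B)\sin(\pi\log(R/\rho)/\operatorname{mod}(B)))$, since $\hat{K}_0=\hat{K}_1$ and the logarithmic term vanishes. The concavity bound $\sin(\pi t)\geq 2\min(t,1-t)$ on $[0,1]$ and the numerical step $\log\frac{1+x}{1-x}\geq 2x\geq \frac{\pi}{2}x$ (using $\pi<4$) are both fine and give exactly the two displayed inequalities in (\ref{2.7}).
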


Secondly we need some basic notations and results from Nevanlinna
theory of meromorphic functions \cite{Hayman1,Zheng1}.
Set $\log^+x=\log\max\{1,x\}.$ Let $f$ be a meromorphic function.
Define
\begin{eqnarray*}
m(r,f)&:=&\frac{1}{2\pi}\int_0^{2\pi}\log^+|f(re^{i\theta})|{\rm d}\theta,
\\
N(r,f)&:=&\int_0^r\frac{n(t,f)-n(0,f)}{t}{\rm d}t+n(0,f)\log r,
\end{eqnarray*}
where $n(t,f)$ denotes the number of poles of $f$ counted according
to their multiplicities in $\{z: |z|<t\}$; sometimes we write
$n(t,\infty)$ for $n(t,f)$ and $n(t,0)$ for
$n\left(t,\frac{1}{f}\right)$ when $f$ is clear in the context, and
the Nevanlinna characteristic of $f$ by
$$T(r,f):=m(r,f)+N(r,f).$$
Then the deficiency of poles in terms of the Nevanlinna characteristic of $f$ is given by
$$\delta(\infty,f)=\liminf_{r\to\infty}\frac{m(r,f)}{T(r,f)}=1-\limsup_{r\to\infty}\frac{N(r,f)}{T(r,f)}.$$

In the following lemma we summarize some results on the Nevanlinna characteristic that will be used later; this lemma also holds for $\log M(r,f)$ when $f$ is
transcendental entire (see Theorem 2.2 of \cite{BRS}).

\begin{lemma}\label{lem4}\ Let $f$ be a transcendental meromorphic
function. Then

(1)\ $\frac{T(r,f)}{\log r}\rightarrow\infty (r\rightarrow\infty)$;

(2)\ There exists $r_0>0$ such that for $r_0\leq r<R$,
\begin{equation}\label{2.8}T(R,f)\geq\frac{\log R}{\log
r}T(r,f);\end{equation}

\end{lemma}

 Since $T(r,f)$ is a logarithmic convex function, the result (2) in Lemma
\ref{lem4} follows from (1) in Lemma \ref{lem4} of \cite{Zheng8}.

\section{Proofs of Theorems \ref{thm1}-\ref{thm5}}

To prove Theorem \ref{thm1}, we first recall Runge Theorem.

\

{\sl {\bf Runge Theorem}(cf. \cite{Rudin}).\ Let $W$ be a compact
set on the complex plane and let $f(z)$ be analytic on $W$. Assume
that $E$ is a set which intersects every component of
$\mathbb{C}\setminus W$. Then for any $\varepsilon>0$, there exists
a rational function $R(z)$ such that all poles of $R(z)$ lie in $E$
and
$$|f(z)-R(z)|<\varepsilon,\ \forall z\in W.$$}

Now we proceed to the proof of Theorem \ref{thm1}.

\

Take four sequences $\{r_n\},\ \{R_n\},\ \{r'_n\}$ and $\{R'_n\}$
such that $10<r_n<r_n'<R_n'<R_n$, $2\leq r'_n/r_n\leq 3$, $2\leq
R_n/R'_n\leq 3$, $9R_n<r_{n+1}$ and $R_n/r_n=R_{n+1}'/r_{n+1}'$.
Thus
\begin{eqnarray*}
\frac{R_n}{r_n}=\frac{R_{n+1}'}{r_{n+1}'}\leq\frac{1}{4}\frac{R_{n+1}}{r_{n+1}}, \quad \frac{R_n}{r_n}\geq 4^{n-1}\frac{R_1}{r_1}\to\infty \quad
(n\to\infty).
\end{eqnarray*}
Define
$$T_n(z)=\frac{r'_{n+1}}{r_n}z:\ A(r_n,R_n)\to
A(r'_{n+1},R'_{n+1}).$$ Take $a_n$ and $b_n$ such that $b_n>R_n+n$,
$b_n/R_n\to 1 (n\to\infty)$ and $a_n<r_{n}-n$,
$a_n/r_n\to 1\ (n\to\infty)$. Set $A_n=A(r_n,R_n),\
C_n=\{z:|z|=b_n\ \text{or}\ a_n\}$ and $B_n=B(0,r_n/4)$ with
$r_1>4$.

Choose a sequence of positive numbers $\{\varepsilon_n\}$ such that
$\varepsilon_{n+1}<\frac{1}{2}\varepsilon_n$ and
$\varepsilon_1<\frac{1}{2}$. In view of the Runge's Theorem, we have
a rational function $f_1(z)$ such that
$$|f_1(z)-T_1(z)|<\varepsilon_1, \forall\ z\in A_1;\ |f_1(z)|<\varepsilon_1, \quad \forall\ z\in
B_1$$
$$|f_1(z)|<\varepsilon_1, \quad \forall\ z\in C_1$$
and inductively, we have rational function $f_{n+1}(z)$ such that
$$\left|\sum_{k=1}^{n+1}f_k(z)-T_{n+1}(z)\right|<\varepsilon_{n+1},
\forall\ z\in A_{n+1};\ |f_{n+1}(z)|<\varepsilon_{n+1}, \quad \forall\
z\in B_{n+1}$$ and
$$\left|\sum_{k=1}^{n+1}f_k(z)\right|<\varepsilon_{n+1}, \quad \forall\
z\in C_{n+1}.$$ Write $f(z)=\sum_{n=1}^\infty f_n(z)$. Since this
series is uniformly convergent on any compact subset of
$\mathbb{C}$, $f(z)$ is a meromorphic function on $\mathbb{C}$.

For $z\in B_1$, we have
$$|f(z)|\leq\sum_{n=1}^\infty|f_n(z)|<\sum_{n=1}^\infty\varepsilon_n<1,$$
that is to say, $f(B_1)\subset B(0,1) \subset B_1$ and so $B_1$ is contained in an invariant Fatou component of $f$. For $z\in C_n$, we have, by
noting that $C_n\subset B_m$ for $m>n$,
$$|f(z)|\leq \left|\sum_{k=1}^nf_k(z)\right|+\sum_{k=n+1}^\infty|f_k(z)|
<\varepsilon_n+\sum_{k=n+1}^\infty|f_k(z)|<\sum_{k=n}^\infty\varepsilon_k<1$$
and so $f(C_n)\subset B(0,1)$ and $C_n$ is contained in a
preperiodic Fatou component of $f$. Since for $z\in
A_n=A(r_n,R_n)\subset B_{n+1}$,
\begin{equation}\label{equ3}|f(z)-T_n(z)|\leq
\left|\sum_{k=1}^nf_k(z)-T_n(z)\right|+\left|\sum_{k=n+1}^\infty
f_k(z)\right|<\sum_{k=n}^\infty \varepsilon_k=\varepsilon'_n\
(say),\end{equation} we have
\begin{equation}\label{equ3.2}f(A(r_n,R_n))\subset A\left(r'_{n+1}-\varepsilon'_n,\ R'_{n+1}+\varepsilon'_n\right)\subset
A(r_{n+1},R_{n+1}).\end{equation} Therefore, $A_n$ is contained in a
wandering domain $U_n$ of $f$ and $f: U_n\to U_{n+1}$ is proper.
Since $f$ is univalent in $A_n$ by Rouch\'e's Theorem, $U_n$ is
not doubly connected. Each of $U_n$ has no isolated boundary points.
If $U_n$ is finitely connected, then in view of the Riemann-Hurwitz
Theorem, $f$ is univalent in $U_n$, but the modulus of annulus $A_m$
tends to infinity as $m \rightarrow \infty$, which contradicts the conformal
invariance of annulus modulus. Therefore, $U_n$ must be
infinitely connected.

For a point $a\in A_n$, it follows from (\ref{equ3}) that
$$\frac{r'_{n+1}}{r_n}|a|-\varepsilon'_n\leq
|f(a)|\leq\frac{r'_{n+1}}{r_n}|a|+\varepsilon'_n$$ and
$$\frac{r'_{n+1}-\varepsilon'_n}{r_n}\leq
\frac{|f(a)|}{|a|}\leq\frac{r'_{n+1}+\varepsilon'_n}{r_n}.$$
Inductively, from (\ref{equ3.2}) we have
$$\prod_{k=1}^m\frac{r'_{n+k}-\varepsilon'_{n+k-1}}{r_{n+k-1}}\leq
\frac{|f^m(a)|}{|a|}\leq
\prod_{k=1}^m\frac{r'_{n+k}+\varepsilon'_{n+k-1}}{r_{n+k-1}}.$$ We
note that
\begin{eqnarray*}\prod_{k=1}^\infty\frac{r'_{n+k}+\varepsilon'_{n+k-1}}{r'_{n+k}-\varepsilon'_{n+k-1}}
&=&\prod_{k=1}^\infty\left(1+\frac{2\varepsilon'_{n+k-1}}{r'_{n+k}-\varepsilon'_{n+k-1}}\right)\\
&<&\exp\sum_{k=1}^\infty\frac{2\varepsilon'_{n+k-1}}{r'_{n+k}-\varepsilon'_{n+k-1}}<e^2.\end{eqnarray*}
Thus for two points $a$ and $b$ in $A_n$, we have
\begin{equation}\label{1++}\frac{|f^m(a)|}{|f^m(b)|}\leq\frac{|a|}{|b|}
\prod_{k=1}^m\frac{r'_{n+k}+\varepsilon'_{n+k-1}}{r'_{n+k}-\varepsilon'_{n+k-1}}<
\frac{|a|}{|b|}e^2, \quad \forall m\in \mathbb{N}.\end{equation}

Now we need to treat two cases:

Case (I):\ $|f^m(b)|\leq R'_m <R_m\leq |f^m(a)|$;

Case (II):\ $|f^m(b)|\leq r_m<r'_m\leq |f^m(a)|$.

Below we only prove that Case (I) would be impossible. The same
argument applies to Case (II) as well. Set $E_m=A(a_m,b_m).$
In view of the Principle of Hyperbolic Metric (Schwarz-Pick Lemma),
we can obtain that
\begin{eqnarray*}\ d_U(a,b)&\geq& d_{U_m}(f^m(a),f^m(b))\\
&\geq & d_{E_m}(f^m(b),f^m(a))\\
&\geq & d_{E_m}(R'_m,R_m)\\
&=&\int_{R'_m}^{R_m}\lambda_{E_m}(z)|{\rm
d}z|\\
&=&\int_{R'_m}^{R_m}\frac{\pi}{t\ {\rm
mod}(E_m)\sin(\pi\log\frac{b_m}{t}/{\rm mod}(E_m))}{\rm d}t\\
&\geq &\int_{R'_m}^{R_m}\frac{{\rm
d}t}{t\log\frac{b_m}{t}}\\
&=&\log\frac{\log\frac{b_m}{R'_m}}{\log\frac{b_m}{R_m}}\\
&=&\log\left(1+\frac{\log\frac{R_m}{R'_m}}{\log\frac{b_m}{R_m}}\right)\\
&\geq&
 \log\left(1+\frac{\log 2}{\log\frac{b_m}{R_m}}\right)\to\infty \quad
(m\to\infty).
\end{eqnarray*}
A contradiction is derived. Then for all sufficiently large $m>0$,
$f^m(a)$ and $f^m(b)$ lie in $A_m$, $A(a_m, r'_m)$  or
$A(R'_m,b_m)$ simultaneously.

By noting that $2\leq r'_n/a_n\leq 4$ and $2\leq b_n/R'_n\leq 4$
together with (\ref{1++}), we deduce that for any pair of points $a,
b\in U$ and all sufficiently large $n$, we have
$$\frac{|f^n(a)|}{|f^n(b)|}\leq\max\left\{4,\frac{|a|}{|b|}e^2\right\}.$$\qed

\

For the proof of Theorem \ref{thm6}, we first establish two lemmas.

\begin{lemma}\label{lem5}\ Let $f$ be a meromorphic function on $\{z:\ |z|\leq
R\}$ and $f$ be analytic on $\overline{A}(r,R)$ with $0<r<R/4$. Then
for $r+1<r'<R'\leq R$ and $|z|=r'$, we have
\begin{equation}\label{equ3.9}\log^+|f(z)|\leq\left(\frac{R'+r'}{R'-r'}+
\left(\log\frac{R}{r}\right)^{-1}\log \frac{R'}{r'-r}\right) T(R,f).
\end{equation}
\end{lemma}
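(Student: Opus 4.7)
The plan is to apply the Poisson-Jensen formula for meromorphic functions on the disk $\{|z|\le R'\}$ and then carefully bound each of the three pieces. Writing $z = r'e^{i\phi}$ with $r' < R'$, the formula reads
$$\log|f(z)| = \frac{1}{2\pi}\int_0^{2\pi}\frac{R'^2-r'^2}{|R'e^{i\theta}-z|^2}\log|f(R'e^{i\theta})|\,d\theta - \sum_{|a_k|<R'}\log\left|\frac{R'^2-\bar{a}_k z}{R'(z-a_k)}\right| + \sum_{|b_j|<R'}\log\left|\frac{R'^2-\bar{b}_j z}{R'(z-b_j)}\right|,$$
where $a_k$ (resp.\ $b_j$) runs over zeros (resp.\ poles) of $f$ in $|z|<R'$ counted with multiplicity. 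Each Blaschke-type factor has modulus $\ge 1$ inside the disk, so the zero sum enters with a minus sign and is non-positive; I would drop it to pass to an upper bound. Using the Poisson kernel bound $\tfrac{R'^2-r'^2}{|R'e^{i\theta}-z|^2}\le \tfrac{R'+r'}{R'-r'}$ and $\log|f|\le\log^+|f|$, the integral term is at most $\tfrac{R'+r'}{R'-r'}m(R',f) \le \tfrac{R'+r'}{R'-r'}T(R,f)$.

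Because $f$ is analytic on $\overline{A}(r,R)$, every pole $b_j$ with $|b_j|<R'$ satisfies $|b_j|\le r$. For such a $b_j$ and $|z|=r'$ with $r'>r+1$, I would estimate
$$\left|\frac{R'^2-\bar{b}_j z}{R'(z-b_j)}\right|\le \frac{R'^2+|b_j|r'}{R'(r'-|b_j|)}\le \frac{R'^2+rr'}{R'(r'-r)}\le \frac{2R'}{r'-r},$$
using $|z-b_j|\ge r'-r$ and $rr'\le R'^2$. Summing the $n(r,f)$ pole contributions gives at most $n(r,f)\log\bigl(\tfrac{R'}{r'-r}\bigr)$ plus a term of order $n(r,f)\log 2$. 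The standard counting-function inequality
$$n(r,f)\log\tfrac{R}{r}\le \int_r^R\frac{n(t,f)}{t}\,dt\le N(R,f)\le T(R,f)$$
yields $n(r,f)\le T(R,f)/\log(R/r)$. Assembling,
$$\log^+|f(z)|\le\left(\frac{R'+r'}{R'-r'}+\frac{1}{\log(R/r)}\log\frac{R'}{r'-r}\right)T(R,f),$$
the residual $\log 2/\log(R/r)$ being controlled by the hypothesis $r<R/4$, which forces $\log(R/r)\ge \log 4$.

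The main obstacle is the careful bookkeeping of the constant inside the pole logarithm: the raw Blaschke-factor bound produces $2R'/(r'-r)$ (equivalently $(R'+r')/(r'-r)$) rather than the stated $R'/(r'-r)$, and one must verify that the leftover $\log 2$ can be absorbed via the assumptions $r<R/4$ and $r'-r>1$ without disturbing the advertised form. Everything else (the Poisson kernel bound, the drop of the zero sum, and the conversion $n(r,f)\le T(R,f)/\log(R/r)$) is routine once the hypothesis that all poles lie in $\overline{\{|z|\le r\}}$ is invoked.
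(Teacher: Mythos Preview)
Your approach is essentially identical to the paper's: apply Poisson--Jensen on the disk $\{|z|<R'\}$, bound the boundary integral by $\tfrac{R'+r'}{R'-r'}\,m(R',f)$, drop the zero sum, observe that all poles lie in $\{|z|\le r\}$, and convert $n(r,f)$ to $T(R,f)/\log(R/r)$ via $\int_r^R n(t,f)\,\tfrac{dt}{t}\le N(R,f)$. The paper does exactly these five steps.

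The one place you are more careful than the paper is the Blaschke-factor constant. The paper simply writes $G_D(b_n,z)\le \log\tfrac{R'}{|b_n-z|}$ and moves on, which tacitly assumes $|R'^2-\bar b_n z|\le R'^2$; you correctly note that the honest bound gives $\tfrac{R'^2+rr'}{R'(r'-r)}\le \tfrac{2R'}{r'-r}$ and flag the stray $\log 2$. In the paper's applications the exact constant is irrelevant (only an inequality of the shape $\log M(r',f)\le C\cdot T(\text{larger radius},f)$ is ever used), so either version suffices; your identification of this as the only delicate point is accurate.
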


\begin{proof}\  In view of the Poisson formula, for a point $z\in D=\{z:\ |z|<R'\}$ with $f(z)\not=0,\ \infty$, we have
$$\log|f(z)|=\frac{1}{2\pi}\int_{\partial
	D}\log|f(\zeta)|\frac{\partial G_D(\zeta,z)}{\partial \vec{n}}{\rm
	d}s-\sum\limits_{a_m\in D}G_D(a_m,z)+\sum\limits_{b_n\in D}G_D(b_n,z)$$		
so that 		
\begin{equation}\label{equ3.10}\ \log|f(z)|\leq m(D,z,f)+N(D,z,f),
\end{equation}
where
\begin{eqnarray*}
m(D,z,f)&=&\frac{1}{2\pi}\int_{\partial
D}\log^+|f(\zeta)|\frac{\partial G_D(\zeta,z)}{\partial \vec{n}}{\rm
d}s,
\\
N(D,z,f)&=&\sum\limits_{b_n\in D}G_D(b_n,z),
\end{eqnarray*}
$G_D$ is the Green function for $D$ and  all $a_m$ and $b_n$ are respectively zeros and  poles of $f$
in $D$ counted according to their multiplicities. Then for
$z$ with $|z|=r'$, we have
\begin{eqnarray*}
m(D,z,f)&=&\frac{1}{2\pi}\int_0^{2\pi}\log^+|f(R'e^{i\theta})|\frac{R'^2-|z|^2}{|R'^2-z|^2}{\rm
d}\theta
\\
&\leq&\frac{R'+|z|}{R'-|z|}m(R',f)
\\
&\leq&\frac{R'+|z|}{R'-|z|}T(R',f)
\end{eqnarray*}
and, by denoting the number of poles of $f$ in $D$ by $n(D,f)$, we
have \begin{eqnarray*}N(D,z,f)&\leq&\sum\limits_{b_n\in
D}\log\frac{R'}{|b_n-z|}\\
&\leq & n(D,f)\log \frac{R'}{r'-r}\\
&=&n(r,f)\log \frac{R'}{r'-r}\\
&=& \left(\log\frac{R}{r}\right)^{-1}\int_r^R\frac{n(t,f)}{t}{\rm
d}t\log \frac{R'}{r'-r}\\
&\leq& \left(\log\frac{R}{r}\right)^{-1}\left(\log
\frac{R'}{r'-r}\right) T(R,f).\end{eqnarray*} Therefore we
immediately have (\ref{equ3.9}).
\end{proof}

The following is extracted from the proof of Theorem 1.1 in
\cite{Zheng8}, but the condition (1.4) in that paper is replaced by
the more general condition (\ref{equ1.5}) in this paper.
The condition (1.4) in \cite{Zheng8} is that for arbitrarily large $C$
there exists an $s(C)>0$ such that for $r\geq s(C)$ we have
$$T(Cr,f)-N(Cr,f)\geq T(2r,f)+7\pi\log C.$$
Basically, in order to be able to use Lemma \ref{lem2.2} to obtain the large annulus in the proof of Theorem 1.1 in \cite{Zheng8}, we only deal with the following
$$T(2r,f)+7\pi\log C\leq m(Cr,f).$$
Thus we can take a point $z_1$ such that
$$\log|f(z_1)|\geq T(2r,f)+7\pi\log C.$$
Of course, we can also obtain $z_1$ required from the following inequality
$$T(2r,f)+7\pi\log C\leq \log M(Cr,f).$$
The inequality follows naturally from (\ref{equ1.5}) in this paper. In fact, under (\ref{equ1.5}), we have
\begin{eqnarray*}
\log M(2e^{c+2}Cr,f)&\geq &\left(1-\frac{c}{\log(2e^{c+2}Cr)}\right)T(2e^{c+2}r,f)\\
&\geq&\left(1-\frac{c}{\log(2r)}\right)\left(1+\frac{c+2}{\log(2r)}\right)
T(2r,f)\\
&>&T(2r,f)+\frac{T(2r,f)}{\log 2r}\\
&>&T(2r,f)+7\pi\log (2e^{c+2}C),
\end{eqnarray*} for large $r$. We can establish the following by replacing $C$ in the proof of Theorem 1.1 in \cite{Zheng8} with $D=2e^{c+2}C$.

\begin{lemma}\label{lem6}\ Let $f$ be a meromorphic function such that
the inequality (\ref{equ1.5}) holds. Let $U$ be a Fatou component of $f$. Then there
exist constants $D>1$, $d>1$ and $R_0>0$ such that if for some $m>0$
and $r\geq R_0$,
\begin{equation}\label{equ3.6}A(D^{-d}r,D^dr)\subseteq f^m(U),\end{equation}
then for all sufficiently large $n$, $f^n(U)\supset A_n$ and
$A_{n+1}\subset f(A_n)$ with $A_n=A(r_n,R_n)$, $R_n\geq r_n^\sigma$,
$r_{n+1}>R_n$, $r_n\to\infty\ (n\to\infty)$ and $\sigma>1$.
\end{lemma}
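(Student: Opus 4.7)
The plan is to mimic the proof of Theorem 1.1 in \cite{Zheng8} inductively, but with the direct maximum-modulus hypothesis there replaced by the Nevanlinna-characteristic condition (\ref{equ1.5}). I would show that, given $A(D^{-d}r, D^d r) \subseteq f^m(U)$ with $r \geq R_0$, one can produce both an annulus $A_{m+1} \subseteq f^{m+1}(U)$ satisfying $R_{m+1} \geq r_{m+1}^\sigma$ for a fixed $\sigma > 1$, and a fresh ``big'' annulus $A(D^{-d} r', D^d r') \subseteq f^{m+1}(U)$ with $r' > r$, so that the induction continues indefinitely. The sequence $\{A_n\}_{n \geq m+1}$ constructed this way is the required one.

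The inductive step combines three ingredients. First, I would choose two points $z_1, z_2$ in the big annulus $B := A(D^{-d}r, D^d r)$ near its logarithmic middle, say with $|z_1|, |z_2|$ comparable to $r$ and $|z_1|/|z_2|$ a controlled multiplicative unit; Lemma \ref{lem3} then bounds the hyperbolic distance $d_B(z_1, z_2)$ by an expression of the shape $\pi/(d \cos(\pi/(2d)))\bigl(1 + \pi/(2 \log D)\bigr)$, which by (\ref{equ1.6}) is at most $1/6$. Second, Lemma \ref{lemxx} applied to $f$ on $B$ (once $|f| > 1$ there, as discussed below) supplies the comparison
\[
\log \hat{m}(Cr, f) \geq K \log M(Cr, f), \qquad K = K(D, d) < 1,
\]
with $K$ close to $1$ when $d \log D$ is large. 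Third, (\ref{equ1.5}) together with Lemma \ref{lem4}(1) yields $\log M(Cr, f) \geq (1 - c/\log r) T(r, f) \to \infty$, so the ratio $M(Cr, f)/\hat{m}(Cr, f) \geq M(Cr, f)^{1 - K}$ exceeds any prescribed constant (in particular $e$) for $r \geq R_0$. Choosing $z_1, z_2$ so that $|f(z_1)|, |f(z_2)|$ essentially realize $M(Cr, f)$ and $\hat{m}(Cr, f)$, the covering Lemma \ref{lem2.2} produces
\[
f(B) \supseteq A\bigl(\hat{m}(Cr, f),\, M(Cr, f)\bigr) =: A_{m+1},
\]
and the exponent ratio $\log R_{m+1}/\log r_{m+1} \geq 1/K > 1$ furnishes the required $\sigma \in (1, 1/K)$.

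To restart the induction, the logarithmic width $(1-K)\log M(Cr, f)$ of $A_{m+1}$ must exceed $2 d \log D$, which is precisely what is needed to fit a sub-annulus $A(D^{-d}r', D^d r')$ inside $A_{m+1}$; this follows from (\ref{equ1.7+}) since $(1-K)\log M(Cr, f) \geq (1-K)(1 - c/\log r)(d \log D)(\log r)$ for $r \geq R_0$. The new radius can then be taken with $r' \geq D^d \hat{m}(Cr, f) \geq D^d M(Cr, f)^K$, which dwarfs $r$, so $r_n \to \infty$ and $r_{n+1} > R_n$ come for free.

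The main obstacle is the quantitative bookkeeping: one must verify that the weaker lower bound $\log M(Cr, f) \geq (1 - c/\log r) T(r, f)$, with the factor $(1 - c/\log r)$ strictly less than $1$, still delivers all three of the required estimates (hyperbolic distance $\leq 1/6$, $K$ bounded away from $1$, and $A_{m+1}$ wide enough to house another big annulus) simultaneously for the single set of constants $D, d, R_0$ furnished by (\ref{equ1.6}) and (\ref{equ1.7+}). A secondary subtlety is arranging the analytic hypotheses on $f$ over $B$ (namely $0 \notin f(B)$ for Lemma \ref{lem2.2} and $|f| > 1$ on $B$ for Lemma \ref{lemxx}); both follow from the growth argument itself, since once $\log M(Cr, f)$ is large enough the lower bound $\hat{m}(Cr, f) \geq M(Cr, f)^K$ forces $|f|$ to be large on the relevant circles, and this propagates forward through the induction.
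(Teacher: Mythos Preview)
Your proposal is correct and follows precisely the route the paper itself indicates: the paper does not give an independent proof of this lemma but states that it ``is extracted from the proof of Theorem 1.1 in \cite{Zheng8}, but the condition (1.4) in that paper is replaced by the more general condition (\ref{equ1.5}),'' and the constants $D,d,R_0$ are governed by (\ref{equ1.6}) and (\ref{equ1.7+}) exactly as you use them. The only imprecision is in how you place $z_1,z_2$: you simultaneously ask that $|z_1|/|z_2|$ be a nontrivial multiplicative unit (which is what produces the $\cos(\pi/(2d))$ factor via Lemma~\ref{lem3}) and that $|f(z_1)|,|f(z_2)|$ realize $M(Cr,f),\hat m(Cr,f)$ on a single circle---these are two different placements, and you should commit to one (either works).
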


We remark that if the conclusion of Lemma \ref{lem6} holds, that is
to say, for large $n$, $f^n(U)$ contains the large annulus given by
Lemma \ref{lem6}, then the condition (\ref{equ1.5}) can be replaced
by the following weaker inequality
\begin{equation}\label{equ3.7}\log M(Cr,f)\geq\left(1-\frac{\log\log
r}{\log r}\right)T(r,f), \quad \forall\ r>r_0.
\end{equation}

We will complete the proof of Theorem \ref{thm6} under the condition
(\ref{equ3.7}) instead of (\ref{equ1.5}) after we obtain the large
annulus sequence given in Lemma \ref{lem6}. Now we are in the
position to prove Theorem \ref{thm6}.

\

Under the conditions of Theorem \ref{thm6}, in view of Lemma \ref{lem6}, for
all sufficiently large $n$, we have
\begin{equation}\label{equ3.8}A(r_n,R_n)\subset f^n(U)\end{equation} with
$R_n\geq r_n^\sigma$, $r_{n+1}>R_n$, $r_n\to\infty\ (n\to\infty)$
and $\sigma>1$. Therefore, for a sufficiently large $m$ and $n\geq
m$ we have $r_{n+1}>R_n\geq R_m^{\sigma^{n-m}}$. Without loss of
generality, for a sufficiently large $m$, we rewrite $r_m,\ R_m$ and
$\sigma>1$ such that
$$f^m(U)\supset A(r^\alpha_m,R^\beta_m)\supset A(r_m,R_m)$$
with $R_m=r^\sigma_m$, $(\sigma-1)\log r_m>2m^2$, $0<\alpha<1$ and
$\beta>1$ (for example, with $n=m$ in (\ref{equ3.8}),  we replace $r_m, R_m$
by $r_m^{1/\alpha}, R_m^{1/\beta}$ respectively and choose
$\beta=\alpha\sigma$). Take $R'_m=e^{-m^2}R_m$ and $r'_m=er_m$.
Applying Lemma \ref{lemxx} to $f$ on $A(r_m,R_m)$, we have
\begin{equation}\label{equ3.9+}\hat{m}(R'_m,f)\geq M(R'_m,f)^{s_m},\
s_m=\exp\left(-\frac{\pi^2}{2}\max\left\{\frac{1}{\log\frac{R_m}{R'_m}},
\frac{1}{\log\frac{R'_m}{r_m}}\right\}\right).\end{equation} Next we
estimate $s_m$. Since
$$\log\frac{R_m}{R'_m}=m^2\ \text{and}\ \log\frac{R'_m}{r_m}=(\sigma-1)\log
r_m-m^2>m^2,$$ we have
$$s_m=\exp\left(-\frac{\pi^2}{2m^2}\right).$$
Select a $\tau \in (1, \sigma)$ such that $s_m\sigma>\tau$ and take
two points $a$ and $b$ with $|a|=r'_m$ and $|b|=R'_m$. Set
$\hat{R}_m=r'_me^{m^3}$. Finding $\sigma_m$ such that
$R'_m/C={\hat{R}_m}^{\sigma_m},$ we have
$$\sigma_m=\frac{\log R'_m/C}{\log \hat{R}_m}=\sigma-\frac{\sigma m^3+m^2+\sigma+\log
C}{\log r_m+m^3+1}.$$

Therefore, we have $|f(a)|\leq M(r'_m,f)$ and in view of
(\ref{equ3.7}), (\ref{equ3.9+}) and Lemma \ref{lem4}, we have
$$|f(b)|\geq M(R'_m,f)^{s_m}\geq\exp (s_m\tau_m T(R'_m/C,f))$$
\begin{equation}\label{equ3.9++}= \exp(s_m\tau_mT({\hat{R}_m}^{\sigma_m},f))\geq
\exp(s_m\sigma_m\tau_mT(\hat{R}_m,f)),\end{equation} where
$\tau_m=1-\frac{\log\log (R'_m/C)}{\log (R'_m/C)}$. Since
$r'_m>r_m>r_0^{\sigma^m}$, where $r_0>e$, we have
$$\sigma_m>\sigma-\frac{(C+\sigma) m^3}{\log r_m}>\sigma-\frac{(C+\sigma)m^3}{\sigma^m\log r_0}$$ and
$s_m\sigma_m\tau_m\to\sigma>1 (m\to\infty)$.

In view of Lemma \ref{lem5} with $R'=r_m'(m^2+1)$, $r'=r'_m$,
$r=r_m$ and $R=\hat{R}_m$, we have
\begin{eqnarray*}\log M(r'_m,f)&\leq& \left(\frac{m^2+2}{m^2}
+\frac{\log\frac{r'_m(m^2+1)}{r'_m-r_m}}{\log\frac{\hat{R}_m}{r_m}}\right)T(\hat{R}_m,f)\\
&= &\left(\frac{m^2+2}{m^2}+
\frac{\log\frac{e}{e-1}(m^2+1)}{m^3+1}\right)T(\hat{R}_m,f)\\
&\leq &\left(1+\frac{t}{m^2}\right)T(\hat{R}_m,f),
\end{eqnarray*}
where $t$ is an absolute constant. Set
$t_m=\left(1+\frac{t}{m^2}\right)^{-1}$. Thus combining
(\ref{equ3.9++}) together with the above inequality yields that
\begin{equation}\label{equ3.10+}|f(b)|\geq \exp(s_m\sigma_m\tau_m T(\hat{R}_m,f)) \geq
M(r'_m,f)^{t_ms_m\sigma_m\tau_m}\geq
|f(a)|^{t_ms_m\sigma_m\tau_m}.\end{equation}

Set $r'_{m+1}=|f(a)|$ and $R'_{m+1}=|f(b)|$. Then we have
$R'_{m+1}\geq (r'_{m+1})^{t_ms_m\sigma_m\tau_m}$. Set
$R_{m+1}=e^{(m+1)^2}R'_{m+1}$ and $r_{m+1}=r'_{m+1}/e.$

Now we estimate $d_{U_m}(a,b)$ with $U_m=f^m(U)$ in terms of Lemma \ref{lem3}. Set
$A'_m=A(r_m^\alpha,R_m^\beta)$. Since
\begin{eqnarray*}
\sin\left(\pi\frac{\log\frac{R_m^\beta}{r'_m}}{{\rm
mod}(A'_m)}\right)&=&\sin\left(\pi\frac{(\beta-1/\sigma)\log
R_m-1}{(\beta-\alpha/\sigma)\log R_m}\right)
\\
&\to&\sin\left(\pi\frac{\sigma\beta-1}{\sigma\beta-\alpha}\right)
\quad (m\to\infty)
\end{eqnarray*}
and
\begin{eqnarray*}
\sin\left(\pi\frac{\log\frac{R_m^\beta}{R'_m}}{{\rm
mod}(A'_m)}\right)&=&\sin\left(\pi\frac{(\beta-1)\log
R_m+m^2}{(\beta-\alpha/\sigma)\log R_m}\right)
\\
&\to&\sin\left(\pi\frac{\sigma(\beta-1)}{\sigma\beta-\alpha}\right)
\quad (m\to\infty),
\end{eqnarray*}
we assume that $0<\hat{K}_1\leq \hat{K}_0<\min\left\{\sin\left(\pi\frac{\sigma\beta-1}{\sigma\beta-\alpha}\right),
\sin\left(\pi\frac{\sigma(\beta-1)}{\sigma\beta-\alpha}\right)\right\}$. In terms of Lemma \ref{lem3}, we have, for $|a|=r'_m$ and $|b|=R'_m$,
\begin{eqnarray*}\delta&=&d_{U_m}(a,b)\\
	&\leq& d_{A'_m}(a,b)\\
	&\leq&\frac{\pi^2}{{\rm mod}(A'_m)\hat{K}_0}+\frac{\pi}{{\rm mod}(A'_m)\hat{K}_1}\log\frac{R'_m}{r'_m}\\
		&=&\frac{\pi^2}{{\rm
				mod}(A'_m)\hat{K}_0}+\frac{\pi}{(\beta-\alpha/\sigma)\hat{K}_1\log R_m}((1-1/\sigma)\log
		R_m-m^2-1)\\
		& \to &\frac{\pi(\sigma-1)}{(\sigma\beta-\alpha)\hat{K}_1}
		\quad (m\to\infty).
\end{eqnarray*}
Therefore, for sufficiently large $m$ and $\sigma>1$ close to $1$,
we have $\delta=d_{U_m}(a,b)<\frac{1}{6}.$ In
view of the inequality, (\ref{equ3.10+}) and (\ref{equ3.9++}),
we have
\begin{eqnarray*}e^{-\kappa}\left(\frac{|f(b)|}{|f(a)|}\right)^{1/\delta-1}&\geq&
e^{-\kappa}\left(|f(b)|^{1-1/(t_ms_m\sigma_m\tau_m)}\right)^{5}\\
&\geq &
\exp\left[5\left(1-\frac{1}{t_ms_m\sigma_m\tau_m}\right)s_m\sigma_m\tau_m
T(\hat{R}_m,f)-\kappa\right]\\
&>&\exp[3(\sigma-1) T(\hat{R}_m,f)]
\\
&>&e^{(m+1)^2}.
\end{eqnarray*}
Thus, for $|f(a)|\leq |f(\hat{z})|\leq |f(b)|$ with $\hat{z}\in U_m$
we have
$$e^\kappa\left(\frac{|f(a)|}{|f(b)|}\right)^{1/\delta}|f(\hat{z})|\leq
e^\kappa\left(\frac{|f(a)|}{|f(b)|}\right)^{1/\delta-1}r'_{m+1}<r'_{m+1}/e=r_{m+1};$$
and
$$e^{-\kappa}\left(\frac{|f(b)|}{|f(a)|}\right)^{1/\delta}|f(\hat{z})|\geq
e^{-\kappa}\left(\frac{|f(b)|}{|f(a)|}\right)^{1/\delta-1}R'_{m+1} >
e^{(m+1)^2}R'_{m+1}=R_{m+1}.$$ Since $d_{U_m}(a,b)<\frac{1}{6}$, according to Lemma \ref{lem2.2}, we have
$$U_{m+1}=f(U_m)\supset A(r_{m+1},R_{m+1})\supset \overline{A}(r'_{m+1},R'_{m+1}).$$
Thus, by noting that $d_{U_{m+1}}(f(a),f(b))\leq
d_{U_m}(a,b)<\frac{1}{6}$ ($\beta$ and $\alpha$ are just used to
imply the inequality), $|f(a)|=r'_{m+1}$ and $|f(b)|=R'_{m+1}$ , we
can repeat the above step and obtain that
\begin{equation}\label{equ}|f^n(b)|\geq
|f^n(a)|^{\prod_{k=0}^{n-1}s_{k+m}t_{k+m}\sigma_{k+m}\tau_{k+m}}.\end{equation}
For sufficiently large $m$, we can require, for $n\geq 1$,
$$\prod_{k=0}^{n-1}s_{k+m}t_{k+m}\sigma_{k+m}\tau_{k+m}>\frac{\sigma+1}{2}>1.$$

Take a point $c\in U$. Define
$$h_n(z)=\frac{\log|f^n(z)|}{\log|f^n(c)|},\ \forall\ z\in U.$$
Since $U$ is wandering and escaping, we can assume that $|f^n(z)|>1$
on $U$ and hence with (\ref{equ1}) we conclude that $h_n(z)$ is harmonic and
positive on $U$. In view of (\ref{equ1}) or by Harnack's inequality,
the family $\{h_n\}$ of harmonic functions is locally normal on $U$
and hence
$$\overline{h}(z):=\limsup_{n\to\infty} h_n(z),\ \forall\ z\in U$$
exists and $\overline{h}$ is harmonic and positive on $U$. Then it follows from (\ref{equ}) that
$$h_n(b)\geq \frac{\sigma+1}{2}h_n(a), \quad
\overline{h}(b)\geq\frac{\sigma+1}{2}\overline{h}(a)>\overline{h}(a).$$
Therefore, $\overline{h}(z)$ is not a constant on $U$. The same
argument implies that $$\underline{h}(z):=\liminf_{n\to\infty}
h_n(z),\ \forall\ z\in U$$ exists and $\underline{h}$ is a
non-constant, harmonic and positive function on $U$.

Suppose that $\overline{h}(z_0)>\underline{h}(z_0)$ for some $z_0\in
U$. Without any loss of generality, suppose that
$\overline{h}(z_0)>\overline{h}(c)=1$. Take a real number $\eta$
with $\max\{1,\underline{h}(z_0)\}<\eta<\overline{h}(z_0).$ Since
$\overline{h}(z)$ is not a constant, we can find $z_1$ and $z_2$
such that
$\overline{h}(z_1)<1=\overline{h}(c)<\overline{h}(z_0)<\overline{h}(z_2)$.
In view of Lemma \ref{lem2.2}, for some sufficiently large $m$, we
have
$$f^m(U)\supset A\left(|f^m(c)|^\alpha,|f^m(c)|^\beta\right),$$
where $\overline{h}(z_1)<\alpha<1$ and
$1<\overline{h}(z_0)<\beta<\overline{h}(z_2).$

From the argument of the above paragraph we can take $a=f^m(c)$ and
$b=f^m(z_0)$ such that
$$|f^{n+m}(z_0)|>|f^{n+m}(c)|^\eta,\ \forall\ n\in \mathbb{N}.$$
This implies that $\underline{h}(z_0)\geq \eta$. A contradiction is
derived and so we have proved that $\overline{h}(z)=\underline{h}(z)$ on
$U$, that is to say,
$$h(z)=\lim_{n\to\infty}\frac{\log|f^n(z)|}{\log|f^n(c)|}$$
exists on $U$.

Set
$$H(z):=\lim_{n\to\infty}\frac{\log|f^n(z)|}{ \log|f^n(z_0)|}=\frac{h(z)}{h(z_0)}.$$
Then $H$ is a non-constant, harmonic and positive function on $U$. Thus
we can find two points $z_1$ and $z_2$ in $V$ and $0<\alpha<1$ such
that $H(z_1)<1-\alpha<1<1+\alpha<H(z_2)$. For a sufficiently large
$m$, we have $d_{f^m(V)}(f^m(z_1),f^m(z_2))<\frac{1}{6}$ and hence,
in view of Lemma \ref{lem2.2}, for $n\geq m$
$$f^n(V)\supset A(|f^n(z_1)|,|f^n(z_2)|)\supset
A(|f^n(z_0)|^{1-\alpha},|f^n(z_0)|^{1+\alpha}).$$

Thus we have (\ref{equ3.6}) for sufficiently large $m$ and $f^m(V)$
instead. In view of Lemma \ref{lem6}, for all sufficiently large $n$
and some $\tau>1$, we obtain $A_n=A(r_n,R_n)\subseteq  f^n(V)$ with
$R_n\geq r_n^\tau\to\infty (n\to\infty)$ such that $A_{n+1}\subset
f(A_n)$.

This completes the proof of Theorem \ref{thm6}. \qed

 \

 {\it Proof of Theorem \ref{thm3}.} In view of Theorem \ref{thm6}, there exist a sufficiently large $m$ and a sufficiently
 large $r_m$ such that for $n\geq m$ and $r_{n+1}>R_n\geq
 r_n^\sigma$ with $\sigma>1$ we have
 $$f^n(U)\supset A(r_n^\alpha, R_n^\beta),\ 0<\alpha<1,\ 1<\beta.$$
From (\ref{equ1.6+}) and (\ref{equ1.5}), by using Lemma
\ref{lemxx}, the version of Lemma \ref{lem4} for $M(r,f)$ from \cite{BRS} and Lemma \ref{lem5}, we can
show that
$$M(R_m,g)\geq \hat{m}(R_m,g)>M(r_m,g)\geq \hat{m}(r_m,g)$$
and in view of the maximum principle, we have
\begin{equation}\label{equ3.12}g(A(r_m,R_m))\subseteq A(\hat{m}(r_m,g), M(R_m,g)).\end{equation} Set
$u(z)=f(z)-g(z)$ and $s_m=\exp\left(-\frac{\pi^2}{2m^2}\right)$. For
sufficiently large $m$, we have $s_m>\max\{1/2,\delta\}$. Using
Lemma \ref{lemxx} with $r=e^{-m^2}r_m$, $\rho=r_m$ and $R=R_m$ and
noting that $\log(1-x)>-2x$ for $0<x<\frac{1}{2}$, we have
$$\hat{m}(r_m,g)\geq \hat{m}(r_m,f)-M(r_m,u)\geq
M(r_m,f)^{s_m}-M(r_m,u)$$
$$=\left(1-\frac{M(r_m,u)}{M(r_m,f)^{s_m}}\right)M(r_m,f)^{s_m}\geq
M(r_m,f)^{s_m\gamma_m},$$ where
$\gamma_m=1-\frac{4M(r_m,f)^{\delta-s_m}}{\log
M(r_m,f)}>1-\frac{1}{m^2}$,  and from the inequality $\log (1+x)<x$ for
$x>0$, we have $$M(R_m,g)\leq
M(R_m,f)+M(R_m,u)$$$$=M(R_m,f)\left(1+\frac{M(R_m,u)}{M(R_m,f)}\right)\leq
M(R_m,f)^{\tau_m},$$ where
$\tau_m=1+\frac{1}{M(R_m,f)^{1-\delta}\log
M(R_m,f)}<1+\frac{1}{m^2}$ for sufficiently large $m$. Thus it
follows from (\ref{equ3.12}) that
\begin{equation}\label{equ3.12+}g(A(r_m,R_m))\subseteq
A(M(r_m,f)^{s_m\gamma_m},M(R_m,f)^{\tau_m}).\end{equation}
 On the other hand, we have
 \begin{equation}\label{equ3.13}f^{m+1}(U)\supset f(A(r_m^\alpha, R_m^\beta))\supset
 A(M(r_m^\alpha,f),\hat{m}(R_m^\beta e^{-m^2},f)).\end{equation}
 By Lemma \ref{lemxx} and (\ref{equ1.5}), we have
 $$\hat{m}(R_m^\beta e^{-m^2},f)\geq M(R_m^\beta e^{-m^2},f)^{s_m}\geq \exp(\sigma_mt_m
 T(R_m^\beta e^{-m^2}/C,f)),$$
where $t_m=1-\frac{\log\log (R_m^\beta e^{-m^2}/C)}{\log
 (R_m^\beta e^{-m^2}/C)}>1-\frac{1}{m^2}$ for sufficiently large $m$,  and in view of Lemma \ref{lem5},
 we can show that
$$\tau_m\log M(R_m,f)<s_mt_m
 T(R_m^\beta e^{-m^2}/C,f)$$
 and by (\ref{equ1.5}) and  the version of Lemma \ref{lem4} for $M(r,f)$ from \cite{BRS}, we have
$$s_m\gamma_m \log M(r_m,f)>\log M(r_m^\alpha,f).$$
Thus $M(R_m^\beta e^{-m^2},f)^{s_m}>M(r_m^\alpha,f)$ and from
(\ref{equ3.13}) we can deduce
\begin{equation}\label{equ3.14}f^{m+1}(U)\supset
A(M(r_m^\alpha,f),M(R_m^\beta e^{-m^2},f)^{s_m}).\end{equation}

Now we set $\hat{r}_1=M(r_m^\alpha,f)^{s_m\gamma_m/\alpha}$ and
$\hat{R}_1=M(R_m,f)^{\tau_m}$ so that we have
$M(r_m,f)^{s_m\gamma_m}\geq \hat{r}_1$. Combining (\ref{equ3.12+})
and (\ref{equ3.14}) yields
$$f^{m+1}(U)\supset
A(\hat{r}_1^{\alpha\eta_m},\hat{R}_1^{\beta_m})\supset
A(\hat{r}_1,\hat{R}_1)\supset g(A(r_m,R_m))$$ with
$\eta_m=\frac{1}{s_m\gamma_m}>1$ and
$\beta_m=\frac{s_m}{\tau_m}\frac{\log M(R_m^\beta e^{-m^2},f)}{\log
M(R_m,f)}\to\beta\ (m\to\infty)$. For sufficiently large $m$, we can
require that $0<\alpha\eta_m<\frac{\alpha+1}{2}<1$ and
$1<\frac{\beta+1}{2}<\beta_m$.

 By the same argument as above, we have
$$f^{m+2}(U)\supset
A(\hat{r}_2^{\alpha\eta_m\eta_{m+1}},\hat{R}_2^{\beta_{m+1}})\supset
A(\hat{r}_2,\hat{R}_2)\supset g^2(A(r_m,R_m))$$ with
$\hat{r}_2=M(\hat{r}_1^{\alpha\eta_m},f)^{1/(\eta_{m+1}\eta_m\alpha)}$,
$\hat{R}_2=M(\hat{R}_1,f)^{\tau_{m+1}}$,
$0<\alpha\eta_m\eta_{m+1}<\frac{\alpha+1}{2}<1$ and
$1<\frac{\beta+1}{2}<\beta_{m+1}$.

Inductively, we have
$$f^{m+n}(U)\supset g^n(A(r_m,R_m)).$$
Thus $A(r_m,R_m)$ is contained in an escaping Fatou component $W$ of $g$.


We note that in $A(r_m,R_m)$ we have \begin{eqnarray*}\log
M(Cr,g)&\geq&\log(M(Cr,f)-M(Cr,u))\\
&= &\log M(Cr,f)+\log\left(1-\frac{M(Cr,u)}{M(Cr,f)}\right)\\
&\geq&\left(1-\frac{2\log\log r}{\log r}\right)T(r,f)\end{eqnarray*}
and for $r_m<r+1<r'<R'<R\leq R_m$ we have
\begin{eqnarray*}\log
M(r',g)&\leq &\log
M(r',f)+\log\left(1+\frac{M(r'u)}{M(r',f)}\right)\\
&\leq &\left(\frac{R'+r'}{R'-r'}+
\left(\log\frac{R}{r}\right)^{-1}\log \frac{R'}{r'-r}\right)
T(R,f)+1.
\end{eqnarray*}
Thus we can repeat the arguments in the proof of Theorem \ref{thm6} and
prove that the results of Theorem \ref{thm6} hold for $g$ in $W$. \qed

We make a remark on (\ref{equ1.6+}). From the proof of Theorem
\ref{thm3} we see that in order that Theorem \ref{thm3} holds for
$g$, in fact, we only need to require (\ref{equ1.6+}) holds in a
sequence of annuli $A(r_n,r_n^{\sigma_n})$ instead of $f^n(U)$ as
long as $A(r_n,r_n^{\sigma_n})\subset f^n(U)$ for $\forall\ n\geq m$
with $m$ being large enough and $f(A(r_n,r_n^{\sigma_n}))\subset
A(r_{n+1},r_{n+1}^{\sigma_{n+1}})$, where $1<\sigma\leq \sigma_n$.

\

{\it Proof of Theorem \ref{thm5}.}\  With the help of Theorem \ref{thm3} we
will find a meromorphic function $f$ with
$\delta(\infty,f)=0$ which has an escaping Fatou wandering domain
$U$ such that the results of Theorem \ref{thm6} hold for $f$ in $U$.

In \cite{BRS}, Bergweiler, Rippon and Stallard proved the results of
Theorem \ref{thm6} for entire functions in their multiply connected
Fatou components. The first entire function with multiply connected
Fatou component is due to I. N. Baker \cite{Baker1}. The multiply
connected wandering domains which have uniformly perfect boundary or
non-uniformly perfect boundary were found in \cite{BZ}. For example,
in Theorem 1.3 of \cite{BZ}, for a
sequence $\{r_k\}$ of positive numbers with $r_{k+1}>2r_k^2$ and a
sequence $\{\varepsilon_k\}$ of positive numbers tending to $0$ as
$k\to\infty$, an entire function $f$ of zero order is
constructed such that
$$f(A((1+\varepsilon_k)r_k,(1-\varepsilon_k)r_{k+1}))\subset
A((1+\varepsilon_{k+1})r_{k+1},(1-\varepsilon_{k+1})r_{k+2}).$$  Moreover, the Fatou components $U$ containing
$A((1+\varepsilon_k)r_k,(1-\varepsilon_k)r_{k+1})$ may have
uniformly perfect boundaries by choosing $\{r_k\}$ suitably.

Now we put
$$g(z)=f(z)+\sum_{n=1}^\infty\frac{1}{2^nm_n}\sum_{k=1}^{m_n}\frac{\varepsilon_n}{z-z_{k,n}},$$
where $m_n=[r_{n+1}]+1$, $[r_{n+1}]$ is the maximal integer not
exceeding $r_{n+1}$,  and $z_{k,n}=r_ne^{i\frac{2k\pi}{m_n}}$. It is
clear that $g$ is a meromorphic function. For $|z|=r$ with
$(1+\varepsilon_k)r_k<r<(1-\varepsilon_k)r_{k+1}$, we have
\begin{eqnarray*}|f(z)-g(z)|&\leq&\sum_{n=1}^\infty\frac{1}{2^nm_n}\sum_{j=1}^{m_n}\frac{\varepsilon_n}{|z-z_{j,n}|}\\
&\leq&\sum_{\begin{array}{c}n=1\\
n\not=k,k+1\end{array}}^\infty\frac{1}{2^n}\frac{\varepsilon_n}{|r-r_n|}\\
&\ &+\frac{1}{2^k}\frac{\varepsilon_k}{r-r_k}+
\frac{1}{2^{k+1}}\frac{\varepsilon_{k+1}}{r_{k+1}-r}\\
&<&\sum_{n=1}^\infty\frac{1}{2^n}=1,
\end{eqnarray*}
and this implies (\ref{equ1.6+}) for $|z|=r$ with
$(1+\varepsilon_k)r_k<r<(1-\varepsilon_k)r_{k+1}$. In view of
Theorem \ref{thm3} and from the remark after the proof of Theorem
\ref{thm3}, there exists an escaping wandering domain $W$ of $g$
such that the results of Theorem \ref{thm6} hold for $g$ and $W$.

Next we calculate the deficiency $\delta(\infty,g)$. For $er_k<r<(1-\varepsilon_k)r_{k+1}$, we have $m(r,g)\leq
m(r,f)+\log 2$ and
$$N(r,g)=\int_0^r\frac{n(t,g)}{t}{\rm d}t\geq \int_{r_k}^r\frac{m_k}{t}{\rm d}t\geq r_{k+1}\geq r.$$
Then as $r\in\cup_{k=m}^\infty
A(er_k,(1-\varepsilon_k)r_{k+1})\to\infty$, we have
$$\frac{m(r,g)}{T(r,g)}\leq\frac{m(r,g)}{N(r,g)}\leq\frac{m(r,f)+\log
2}{r}\to 0,$$ by using the fact that $f$ is of zero order. This implies that
$$\delta(\infty,g)=\liminf_{r\to\infty}\frac{m(r,g)}{T(r,g)}=0.$$
Then $g$ is the desired meromorphic function of Theorem \ref{thm5}.
\qed

Nevertheless we do not know if $W$ has uniformly perfect boundary. Therefore,
we ask the following question

\

{\sl {\bf Question $\mathcal{B}$:}\ Is there any meromorphic
function with zero Nevanlinna deficiency at poles which has a
multiply connected escaping wandering domain with uniformly perfect
boundary?}

\

Perhaps the following approach would be possible to solve Question
$\mathcal{B}$. We try to control the changes of critical values as
an entire function, that has a multiply connected escaping wandering
domain with uniformly perfect boundary, is changed to a
meromorphic function with zero Nevanlinna deficiency at poles and
then in view of Theorem \ref{thm3} we show the meromorphic function also has
a multiply connected escaping wandering domain with uniformly
perfect boundary.

\section*{Acknowledgements}
The authors would like to thank the anonymous referee for reading the manuscript
carefully and providing valuable suggestions which have improved the readability of this paper considerably.

\section*{Funding}
J.H. Zheng was supported by the National Natural Science Foundation of China (grant number 12071239).
C.F. Wu was supported by the National Natural Science Foundation of China (grant numbers 11701382 and 11971288); and Guangdong Basic and Applied Basic Research Foundation, China (grant number 2021A1515010054).


\begin{thebibliography}{0}
\bibitem{Baker} I. N. Baker, The iteration of polynomials and
transcendental entire functions, J. Aust. Math. Soc. (Ser. A) {\bf
30}(1981), 483-495

\bibitem{Baker1} I. N. Baker, Multiply connected domains of
normality in ieteration theory, Math. Z. {\bf 81} (1963), 206-214

\bibitem{Baker2} I. N. Baker, The domains of normality of an entire
functions, Ann. Acad. Sci. Fenn. Ser. A I Math. {\bf 1} (1975), no.
2, 277-283

\bibitem{Baker3} I. N. Baker, An entire function wich has wandering
domains, J. Aust. Math. Soc. (Ser. A) {\bf 22} (1976), 173-176

\bibitem{Baker4} I. N. Baker, Wandering domains in the iteration of
entire functions, Proc. Lond. Math. Soc. (3) {\bf 49} (1984), 563-576





\bibitem{BKL1} I. N. Baker, J. Kotus and Y. L\"u, \textit{Iterates of
meromorphic functions II: Examples of wandering domains}, J. London
Math. Soc. (2) {\bf 42} (1990), 267-278



\bibitem{ber}  W. Bergweiler, \textit{Iteration of meromorphic functions}, Bull. Amer. Math. Soc. {\bf 29} (1993), 151-188

\bibitem{Ber}  W. Bergweiler, An entire function with simply and
multiply connected wandering domains, Pure Appl. Math. Quarterly
(2) {\bf 7} (2011), 107-120




\bibitem{BRS}  W. Bergweiler, P. J. Rippon and G. M. Stallard,
\textit{Multiply connected wandering domains of entire functions},
Proc. London Math. Soc. (3) {\bf 107} (2013),   1261-1301

\bibitem{BZ} W. Bergweiler and Jianhua Zheng,
On the uniform perfectness of the boundary of multiply connected
wandering domains, J. Aust. Math. Soc. {\bf 91} (2011),   289-311

\bibitem{Bi} C. J. Bishop, Constructing entire functions by
quasiconformal folding, Acta Math. {\bf 214} (2015), 1-60

\bibitem{D} P. Dom\'inguez, Dynamics of transcendental meromorphic
functions, Ann. Acad. Sci. Fenn. Math. {\bf 23} (1998), 225-250



\bibitem{EL} A. E. Eremenko and M. Yu. Lyubich, Examples of entire
functions with pathological dynnamics, J. London Math. Soc. (2) {\bf
36} (1987), 458-468

\bibitem{FGJ} N. Fagella, S. Godillon and X. Jarque, Wandering
domains for composition of entire functions, J. Math. Anal. Appl.
{\bf 429} (2015), 478-496
\bibitem{Hayman1} W. K. Hayman, \textit{Meromorphic functions,} Oxford
University Press, 1964.

\bibitem{Her} M. Herman, Exemples de fractions rationnelles ayant
une orbite dense sur la sph\'ere de Riemann, Bull. Soc. Math. France
{\bf 112} (1984), 93-142

\bibitem{KS} M. Kisaka and M. Shishikura,
\textit{On multiply connected wandering domains of entire
functions}, Transcendental Dynamics and Complex Analysis (LMS
Lecture Note Series vol. 348) ed P J Rippon and G M Stallard
(Cambridge University Press, 2008) pp217-250

\bibitem{L} K. Lazebnik, Several constructions in the
Eremenko-Lyubich class, J. Math. Anal. Appl. {\bf 448} (2017),
611-632


\bibitem{Rippon} P.J. Rippon, Baker domains of meromorphic
functions, Ergodic Theory Dynam. Systems {\bf 26} (2006), 1225-1233

\bibitem{R} P.J. Rippon, Baker domains, Transcendental Dynamics and
Complex Analysis (LMS Lecture Note Series vol. 348) ed P J Rippon
and G M Stallard (Cambridge University Press, 2008) pp 371-395





\bibitem{RipponStallard2} P.J. Rippon and G.M. Stallard, On multiply
connected wandering domains of meromorphic functions, J. London
Math. Soc. (2) {\bf 77} (2008), 405-423





\bibitem{Rudin} W. Rudin, Real and complex analysis (McGraw-Hill,
1986)

\bibitem{S} D. J. Sixsmith, Simply connected fast escaping Fatou
components, Pure Appl. Math. Q. (4) {\bf 8} (2012), 1029-1046


\bibitem{T}H. T\"opfer, \"Uber die Iteration der ganzen
transzendenten Funktinen, insbesondere von $\sin z$ und $\cos z$,
Math. Ann., {\bf 117} (1939), 65-84

\bibitem{Zheng1} J. H. Zheng, \textit{Value Distribution of
Meromorphic Functions}, Tsinghua Univesity Press Beijing and
Springer-Verlag Berlin, 2010
\bibitem{Zheng2} J. H. Zheng, \textit{On multiply-connected Fatou
components in iteration of meromorphic functions}, J. Math. Anal.
Appl. {\bf 313} (2006), 24-37
\bibitem{Zheng4} J. H. Zheng, \textit{On non-existence of unbounded domains of normality of meromorphic functions},
J. Math. Anal. Appl. {\bf 264} (2001), 479-494
\bibitem{Zheng8} J. H. Zheng, \textit{Hyperbolic metric and multiply
connected wandering domains of meromorphic functions,} Proc.
Edinburg Math. Soc. {\bf 60} (2017), 787-810




\end{thebibliography}
\end{document}